\newtheorem{remark}[theorem]{Remark}
\newtheorem{example}[theorem]{Example}
\def\tr{{\rm tr}}
\def\max{{\rm max}}
\def\min{{\rm min}}
\begin{document}
	
	\thispagestyle{empty}
	\bibliographystyle{siam}

	\title{On the Golub--Kahan bidiagonalization for ill-posed tensor equations with applications to color image restoration}
	\author{Fatemeh P. A. Beik$^1$, Khalide Jbilou$^2$, Mehdi Najafi-Kalyani$^{1}$ and Lothar Reichel$^3$}
	
	\footnotetext[1]{Department of Mathematics, Vali-e-Asr University of Rafsanjan, PO Box 518, Rafsanjan, Iran (f.beik@vru.ac.ir (F. P. A. Beik); mehdi.najafi@stu.vru.ac.ir (M. Najafi-Kalyani)).}
	\footnotetext[2]{Laboratory LMPA, 50 Rue F. Buisson, ULCO calais cedex, France
		(jbilou@univ-littoral.fr).
		}
	\footnotetext[3]{Department of Mathematical Sciences, Kent State University,
		Kent, OH 44242, USA	(reichel@math.kent.edu).
	}
	\maketitle
	
	\markboth{{\sc F. P.~A.~Beik, K.~Jbilou, M.~Najafi-Kalyani and L.~Reichel}}
	{\sc Golub--Kahan bidiagonalization for ill-posed tensor equations}

	\begin{abstract}
This paper is concerned with solving ill-posed tensor linear  equations. These kinds of equations may appear from finite difference discretization of high-dimensional convection-diffusion problems or when partial differential equations in many dimensions are discretized by collocation spectral methods. Here, we propose  the Tensor Golub--Kahan bidiagonalization (TGKB) algorithm  in conjunction with the well known Tikhonov regularization method  to solve the mentioned problems.
Theoretical results are presented to discuss on conditioning of the Stein  tensor equation and to reveal that how the TGKB process can be exploited for general tensor equations. In the last section, some classical test problems  are examined to numerically illustrate the feasibility of proposed algorithms and also applications for color image restoration are considered.
	\end{abstract}
	
	\begin{keywords}
Tensor linear operator equation, Ill-posed problem, Tikhonov regularization, Golub--Kahan bidiagonalization.
	\end{keywords}
	
	\begin{AMS}
		65F10, 15A24
	\end{AMS}

	\section{Introduction}
	\label{intro}
	
	This paper deals with  solving severely ill-conditioned tensor equations. We are particularly interested in  Sylvester  and Stein  tensor equations. It should be commented the proposed iterative schemes can be used for solving,
	\begin{equation}\label{eq131}
		\mathcal{L}(\mathscr{X})=\mathscr{C},
\end{equation}
	where $\mathcal{L}:\mathbb{R}^{I_1\times I_2\times \ldots \times I_N} \to \mathbb{R}^{I_1\times I_2\times \ldots \times I_N}$ is an arbitrary linear tensor operator. An ill-posed tensor equation may appear in color image restoration, video restoration, and when solving certain partial differential equations by collocation methods in several space dimensions \cite{Beik,Malek1,Malek2,Momeni-Masuleh,Najafi}. Throughout this work, vectors and matrices are respectively denoted by lowercase and capital letters, and  tensors of order three (or higher) are represented by Euler script letters. Before stating the main problems, we need to recall the definition of $n$-mode product from \cite{Kolda}.

	\begin{definition} \label{def1.1}  The $n$-mode (matrix) product of a tensor $\mathscr{X}\in \mathbb{R}^{I_1\times I_2\times \ldots \times I_N}$ with a matrix $U\in \mathbb{R}^{J\times I_n}$ is denoted by $\mathscr{X} \times_n U$ and is of size
		\[{{{I_1\times \cdots\times I_{n-1}\times J \times I_{n+1}\times\cdots \times I_N}}},\]
		and its elements are defined as follows:
		\[(\mathscr{X} \times_n U)_{_{i_1\cdots i_{n-1}ji_{n+1}\cdots i_N}} = \sum_{i_n=1}^{I_n} x_{i_1i_2\cdots i_N}u_{ji_n}.\]
	\end{definition}
	
	The Sylvester and Stein  tensor equations are respectively given by
	\begin{equation}\label{eq1}
	\mathscr{X}\times_1 A^{(1)}+\mathscr{X}\times_2 A^{(2)}+\ldots+\mathscr{X}\times_N
	A^{(N)}=\mathscr{D},
	\end{equation}
	and
	\begin{equation}\label{eql1}
	\mathscr{X}-\mathscr{X}\times_1 A^{(1)}\times_2 A^{(2)}\ldots\times_N
	A^{(N)}=\mathscr{F},
	\end{equation}
	where the right-hand side tensors $\mathscr{D}, \mathscr{F} \in \mathbb{R}^{I_1\times I_2 \times \ldots \times I_N}$ and the coefficient
	matrices $A^{(n)}\in \mathbb{R}^{I_n\times I_n}$  ($n=1,2,\ldots,N$) are known, and
	$\mathscr{X}\in \mathbb{R}^{I_1\times I_2 \times \ldots \times I_N}$  is the unknown
	tensor.

	Sylvester tensor equation may arise from the discretization of a linear partial differential equation in several space-dimensions by finite differences \cite{Ballani,Beik,Chen} or by spectral methods \cite{Beik,Malek1,Malek2,Momeni-Masuleh,Song}. Some discussions on conditioning of \eqref{eq1}  under certain conditions are presented in \cite{Najafi} where Najafi et al. proposed using the standard Tikhonov regularization technique in conjunction with global Hessenberg processes in tensor form to solve \eqref{eq1} with perturbed right-hand sides. Some results for perturbation analysis of \eqref{eql1} are given in  \cite{Liang} and  a more recent work by Xu and Wang \cite{Xu} where Eq. \eqref{eql1} is solved by tensor form  of the BiCG and BiCR methods.  Liang and Zheng \cite{Liang}  established some results for perturbation analysis of \eqref{eql1} in the case $N$ is even and $A^{(1)}=\cdots=A^{(N)}=A$ with $A$ being a Schur stable (all the eigenvalues of $A$ lie in the open unite disc). However, presented results rely on the matrix two norm of $( {I - {A^{(N)}} \otimes  \cdots  \otimes {A^{(2)}} \otimes {A^{(1)}}} )^{-1}.$
	
	More recently, Huang et al. \cite{Huang2019} proposed global form of well--known iterative methods in their tensor forms to solve a class of tensor equations via the Einstein product. Here, we comment that the proposed iterative approach in this work can
	be also used when the mentioned problem in \cite{Huang2019} is ill-posed.
	
	In this paper, we first establish some results to analyze the conditioning of \eqref{eql1} motivated by \cite{Liang, Xu}. Then the tensor form of the GKB process is proposed
	for solving ill-posed tensor equations. More precisely, we illustrate how
	 tensor--based GKB process can be exploited to solve ill-posed problems  \eqref{eq1} and \eqref{eql1}. To this end, we apply the established results in \cite{Beik} and generalize exploited
	 techniques of \cite{Bentbib}. It is immediate to observe that the results (in Section \ref{Sec3}) can be also used for solving ill-posed problem of the general form \eqref{eq131}.
	
	The remainder of paper is organized as follows. Before ending this section, we present some symbols and notations used throughout next sections. We further recall the concept of contract product between two tensors. In Section \ref{Sec2}, we present some results related to sensitivity analysis of \eqref{eql1}. Section \ref{Sec3} is devoted for constructing an approach based on tensor form of GKB and Gauss-type quadrature in conjunction with Tikhonov regularization technique to solve ill-posed  tensor equations.  In order to illustrate the effectiveness of proposed iterative schemes, some numerical results are reported in Section \ref{Sec4}.
	Finally the paper is ended with a brief conclusion in Section \ref{Sec5}.
	
	\subsection{Notations}
		Given a $N$-mode tensor $\mathscr{X}\in \mathbb{R}^{I_1\times I_2\times \cdots \times I_N}$, the notation $x_{i_1i_2\ldots i_N}$ stands for element $(i_1,i_2,\ldots,i_N)$ of $\mathscr{X}$.  For a given square matrix $A$ with real eigenvalues, we denote the minimum and maximum eigenvalues of $A$ by $\lambda_{\min}(A)$ and $\lambda_{\max}(A)$, respectively. The  set of all eigenvalues (spectrum) of $A$ is signified by $\sigma(A)$. The symmetric and skew-symmetric parts of $A$ are respectively denoted by $\mathcal{H}(A)$ and $\mathcal{S}(A)$, i.e.,
	\[\mathcal{H}(A)=\frac{1}{2}(A+A^T)\quad \text{and} \quad \mathcal{S}(A)=\frac{1}{2}(A-A^T).\]
	By condition number of an invertible matrix $A$, we mean ``$\mathrm{cond}(A)=\|A\|_2\|A^{-1}\|_2$"
	where $\|.\|_2$ is the matrix $2$-norm. The notation $\mathop  \bigotimes \limits_{i = 1}^\ell x_i:= x_1\otimes x_2\otimes \ldots \otimes x_\ell$ is exploited for multi-dimensional  Kronecker product. The vector $\textrm{vec}(\mathscr{X})$  is obtained by using the standard vectorization operator with respect to frontal slices of $\mathscr{X}$. The mode-$n$ matrization of a given tensor $\mathscr{X}$ is  denoted by $X_{(n)}$ which arranges
	the   mode-$n$ fibers to be the columns of resulting matrix. We recall that a fiber is defined by fixing every index but one; see \cite{Kolda} for more details.

	\subsection{Contracted product}
	The $\boxtimes^N$ product between two $N$-mode tensors
	\[
	\mathscr{X}\in \mathbb{R}^{I_1\times I_2 \times \cdots \times I_{N-1} \times  I_N} \quad \text{and} \quad  \mathscr{Y}\in \mathbb{R}^{I_1\times {I}_2 \times \cdots \times I_{N-1} \times \tilde{I}_N},
	\]
	 is defined as an $I_N \times \tilde{I}_N$ matrix whose $(i,j)$-th entry is
	\[
	[\mathscr{X} \boxtimes^N \mathscr{Y}]_{ij}=\text{tr} (\mathscr{X}_{{::\dots:}i} \boxtimes^{N-1} \mathscr{Y}_{{::\dots:}j}),\qquad N=3,4,\ldots,
	\]
	where
	\[\mathscr{X} \boxtimes^2 \mathscr{Y}= \mathscr{X}^T \mathscr{Y}, \qquad \mathscr{X}\in \mathbb{R}^{I_1\times I_2}, \mathscr{Y}\in \mathbb{R}^{I_1\times \tilde{I}_2}.\]
	The $\boxtimes^N$ product can be mentioned as a special case of the contracted product \cite{Cichocki}. More precisely, $\mathscr{X} \boxtimes^N \mathscr{Y}$ is the contracted product of $N$-mode tensors $\mathscr{X}$ and $\mathscr{Y}$ along the first $N-1$ modes.
	For $\mathscr{X}, \mathscr{Y} \in \mathbb{R}^{I_1\times I_2 \times \cdots \times I_N}$, it can be observed that
	\begin{equation}\label{20-1}
	\left\langle {\mathscr{X}, \mathscr{Y}} \right\rangle =\text{tr}(\mathscr{X} \boxtimes^N \mathscr{Y}),\qquad N=2,3,\ldots,
	\end{equation}
	and
	$\left\| \mathscr{X} \right\|^2= \tr (\mathscr{X} \boxtimes^N \mathscr{X})=\mathscr{X} \boxtimes^{(N+1)} \mathscr{X}$ for $\mathscr{X}\in \mathbb{R}^{I_1\times I_2 \times \cdots \times I_N}$.\\
	We finish this part, by recalling the following two useful results from \cite{Beik}.
	
	\begin{lemma}\label{lem3.1}
		If $\mathscr{X}\in \mathbb{R}^{I_1\times\cdots\times I_n\times \cdots \times I_N}$, $A\in\mathbb{R}^{J_n\times I_n}$ and $y\in\mathbb{R}^{J_n}$, then we have
		{\[\mathscr{X}\times_n A \bar{\times}_n y=\mathscr{X}\bar{\times}_n (A^Ty).\]}
	\end{lemma}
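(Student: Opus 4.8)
The plan is to prove the identity by a direct component-wise computation, reducing both sides to the same scalar expression indexed by the $N-1$ free indices $(i_1,\ldots,i_{n-1},i_{n+1},\ldots,i_N)$. Recall that the $n$-mode vector product $\mathscr{Z}\,\bar{\times}_n\,v$ of a tensor $\mathscr{Z}$ with a vector $v$ contracts the $n$-th mode against $v$ and lowers the order by one, so that $(\mathscr{Z}\,\bar{\times}_n\,v)_{i_1\cdots i_{n-1}i_{n+1}\cdots i_N}=\sum_{k} z_{i_1\cdots i_{n-1}k i_{n+1}\cdots i_N}\,v_{k}$. Both sides of the claimed equality are tensors of order $N-1$ and the same size $I_1\times\cdots\times I_{n-1}\times I_{n+1}\times\cdots\times I_N$: on the left, $A\in\mathbb{R}^{J_n\times I_n}$ turns the $n$-th mode of $\mathscr{X}$ from size $I_n$ into size $J_n$ before the vector $y\in\mathbb{R}^{J_n}$ collapses it, while on the right $A^Ty\in\mathbb{R}^{I_n}$ collapses the $n$-th mode directly. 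Thus the lemma is really the statement that the $n$-mode matrix multiplication by $A$ is adjoint, under the mode-$n$ contraction, to multiplication by $A^T$, and the verification is purely a matter of matching entries.

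First I would expand the left-hand side. Applying Definition~\ref{def1.1} gives the entries of $\mathscr{X}\times_n A$ as $(\mathscr{X}\times_n A)_{i_1\cdots j\cdots i_N}=\sum_{i_n=1}^{I_n} x_{i_1\cdots i_N}\,a_{ji_n}$ for $j=1,\ldots,J_n$. Contracting the $n$-th mode against $y$ then yields
\[
\big((\mathscr{X}\times_n A)\,\bar{\times}_n\,y\big)_{i_1\cdots i_{n-1}i_{n+1}\cdots i_N}
=\sum_{j=1}^{J_n}\Big(\sum_{i_n=1}^{I_n} x_{i_1\cdots i_N}\,a_{ji_n}\Big)y_j .
\]
The next step is to interchange the two finite sums, which is legitimate, and to collect the factors depending on $j$:
\[
=\sum_{i_n=1}^{I_n} x_{i_1\cdots i_N}\Big(\sum_{j=1}^{J_n} a_{ji_n}\,y_j\Big).
\]

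Finally I would identify the inner sum as a single component of the vector $A^Ty$, namely $\sum_{j=1}^{J_n} a_{ji_n}\,y_j=\sum_{j=1}^{J_n}(A^T)_{i_n j}\,y_j=(A^Ty)_{i_n}$. Substituting this back gives $\sum_{i_n=1}^{I_n} x_{i_1\cdots i_N}(A^Ty)_{i_n}$, which is precisely the $(i_1,\ldots,i_{n-1},i_{n+1},\ldots,i_N)$-entry of $\mathscr{X}\,\bar{\times}_n\,(A^Ty)$ by the definition of the $n$-mode vector product. Since the free indices were arbitrary, the two tensors coincide. There is no genuine obstacle here: the only points demanding care are the bookkeeping of which index occupies the $n$-th slot after each operation and the transposition that rewrites $a_{ji_n}$ as $(A^T)_{i_n j}$, while the rearrangement of the double sum is harmless because both index ranges are finite.
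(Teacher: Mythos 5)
Your proof is correct. Note that the paper itself offers no argument for this lemma at all: it is simply recalled from the cited reference \cite{Beik}, so there is no internal proof to compare against. Your component-wise verification --- expanding $\mathscr{X}\times_n A$ via Definition~\ref{def1.1}, contracting against $y$, swapping the two finite sums, and recognizing $\sum_j a_{ji_n}y_j=(A^Ty)_{i_n}$ --- is the standard and essentially unique elementary argument, and it correctly supplies the missing details. The only implicit ingredient is the definition of the mode-$n$ vector product $\bar{\times}_n$, which the paper uses but never formally states; the definition you adopt (contraction of the $n$-th mode, lowering the order by one) is the standard one from Kolda and Bader and is the one consistent with how the lemma is applied later, e.g.\ in the derivation of \eqref{eq201}.
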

	
	\begin{proposition} \label{p2} Suppose that $\mathscr{B}\in \mathbb{R}^{I_1\times I_2 \times \cdots \times I_N\times m}$ is an $(N+1)$-mode tensor with the column tensors $\mathscr{B}_1,\mathscr{B}_2,\ldots,\mathscr{B}_m\in \mathbb{R}^{I_1\times I_2 \times \cdots \times I_N}$ and
		$z=(z_1,z_2,\ldots,z_m)^T\in \mathbb{R}^m$. For an arbitrary $(N+1)$-mode tensor $\mathscr{A}$ with $N$-mode column tensors $\mathscr{A}_1,\mathscr{A}_2,\ldots,\mathscr{A}_m$, the following statement holds
		\begin{equation}
		\mathscr{A} \boxtimes^{(N+1)} (\mathscr{B} \bar{\times}_{_{N+1}} z) = (\mathscr{A} \boxtimes^{(N+1)} \mathscr{B}) z.
		\end{equation}
	\end{proposition}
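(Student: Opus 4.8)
The plan is to compare both sides componentwise in $\mathbb{R}^m$, reducing everything to inner products of the $N$-mode column tensors. First I would record two elementary identities. The first is that contracting $\mathscr{B}$ with $z$ along its last mode simply forms a linear combination of the column tensors, namely
\[\mathscr{B}\bar{\times}_{N+1}z=\sum_{k=1}^m z_k\,\mathscr{B}_k,\]
which follows at once from the definition of the mode-$(N+1)$ product with a vector, since $(\mathscr{B}\bar{\times}_{N+1}z)_{i_1\cdots i_N}=\sum_{k=1}^m b_{i_1\cdots i_N k}\,z_k$. The second is the observation that, by the defining formula for the contracted product together with \eqref{20-1}, the entries of $\mathscr{A}\boxtimes^{(N+1)}\mathscr{B}$ are precisely the pairwise inner products of the column tensors,
\[[\mathscr{A}\boxtimes^{(N+1)}\mathscr{B}]_{ik}=\tr(\mathscr{A}_i\boxtimes^{N}\mathscr{B}_k)=\langle\mathscr{A}_i,\mathscr{B}_k\rangle.\]

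Next I would interpret the left-hand side. The tensor $\mathscr{C}:=\mathscr{B}\bar{\times}_{N+1}z$ is $N$-mode, so $\mathscr{A}\boxtimes^{(N+1)}\mathscr{C}$ is to be read as the contracted product of $\mathscr{A}$ with $\mathscr{C}$ regarded as an $(N+1)$-mode tensor having a single column tensor, i.e. with last dimension equal to $1$; this is the same convention already used in writing $\|\mathscr{X}\|^2=\mathscr{X}\boxtimes^{(N+1)}\mathscr{X}$. Consequently the $i$-th entry of the left-hand side is $\langle\mathscr{A}_i,\mathscr{C}\rangle$. Substituting the first identity and invoking linearity of the inner product in its second argument yields
\[\bigl[\mathscr{A}\boxtimes^{(N+1)}(\mathscr{B}\bar{\times}_{N+1}z)\bigr]_i=\Bigl\langle\mathscr{A}_i,\sum_{k=1}^m z_k\,\mathscr{B}_k\Bigr\rangle=\sum_{k=1}^m z_k\,\langle\mathscr{A}_i,\mathscr{B}_k\rangle.\]

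Finally I would expand the right-hand side using the second identity: the $i$-th entry of $(\mathscr{A}\boxtimes^{(N+1)}\mathscr{B})z$ equals $\sum_{k=1}^m[\mathscr{A}\boxtimes^{(N+1)}\mathscr{B}]_{ik}\,z_k=\sum_{k=1}^m\langle\mathscr{A}_i,\mathscr{B}_k\rangle\,z_k$, which coincides with the expression just obtained for the left-hand side. Since the index $i$ is arbitrary, the two vectors in $\mathbb{R}^m$ are equal. I do not expect a genuine obstacle here; the only point that requires care is the bookkeeping of modes, namely reading off correctly that the $(i,k)$ entry of the contracted product is the inner product $\langle\mathscr{A}_i,\mathscr{B}_k\rangle$, and treating the $N$-mode tensor $\mathscr{C}$ as a single-column $(N+1)$-mode object so that the left-hand side is indeed a vector in $\mathbb{R}^m$ rather than a matrix.
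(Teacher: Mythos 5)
Your proof is correct and complete. One thing to note: this paper does not actually prove Proposition \ref{p2} --- it is explicitly \emph{recalled} without proof from \cite{Beik} (``We finish this part, by recalling the following two useful results from \cite{Beik}''), so there is no in-paper argument to compare against. Your argument is the natural one: expand $\mathscr{B}\bar{\times}_{N+1}z=\sum_{k=1}^m z_k\,\mathscr{B}_k$ from the definition of the mode-$(N+1)$ vector product, identify the entries $[\mathscr{A}\boxtimes^{(N+1)}\mathscr{B}]_{ik}=\langle\mathscr{A}_i,\mathscr{B}_k\rangle$ from the definition of $\boxtimes^{(N+1)}$ together with \eqref{20-1}, and match the two sides entrywise using linearity of the inner product. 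You also resolved the only genuinely delicate point correctly: the left-hand side is meaningful only if the $N$-mode tensor $\mathscr{B}\bar{\times}_{N+1}z$ is read as an $(N+1)$-mode tensor with a single column, which is precisely the convention the paper already employs when writing $\|\mathscr{X}\|^2=\mathscr{X}\boxtimes^{(N+1)}\mathscr{X}$ for an $N$-mode tensor $\mathscr{X}$.
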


	\section{On the sensitivity analysis of Stein tensor equation}\label{Sec2}
	
	In this section, we mainly discuss on conditioning of Stein tensor equation
	\eqref{eql1}. To this end, first, we consider a linear system of equations
	which is equivalent to \eqref{eql1} and then derive some lower and upper bounds for the condition number of the coefficient matrix of the  linear system of equations.
	
\noindent 	It is well-known that  \eqref{eq1} is equivalent to the linear system of equations,
	\begin{equation}\label{vecsys}
	\tilde{\mathcal{A}} x=b,
	\end{equation}
	with $x=\textrm{vec} (\mathscr{X})$, $b=\textrm{vec} (\mathscr{D})$, and
	\begin{equation}\label{kron}
	\tilde{\mathcal{A}}=\sum\limits_{j=1}^{N} {I^{(I_N)}\otimes \cdots \otimes I^{(I_{j+1})}\otimes A^{(j)}\otimes I^{(I_{j-1})}\otimes \cdots \otimes I^{(I_1)}},
	\end{equation}
	In addition, it can be observed that
	\begin{equation*}
	\mathscr{Y} = \mathscr{X}{ \times _1}{A^{(1)}}{ \times _2}{A^{(2)}} \cdots { \times _N}{A^{(N)}} \quad \Leftrightarrow \quad {Y_{(1)}} = {A^{(1)}}{X_{(n)}}{({A^{(N)}} \otimes  \cdots  \otimes {A^{(2)}})^T}.
	\end{equation*}
	In view of the above relation, we deduce that \eqref{eql1} corresponds to the following linear system of equations,
	\[\mathcal{A}x:=\left( {I - {A^{(N)}} \otimes  \cdots  \otimes {A^{(2)}} \otimes {A^{(1)}}} \right)\textrm{vec}(\mathscr{X}) =\textrm{vec}(\mathscr{F}). \]
	As a result,  in view of the fact that``${\left\| {\mathscr{X}} \right\|}={\left\| {\textrm{vec}(\mathscr{X})} \right\|_2}$", the sensitivity analyses of \eqref{eq1} and \eqref{eql1} are closely related to deriving bounds for condition numbers of $\tilde{\mathcal{A}}$ and $\mathcal{A}$, respectively. Basically,
	for linear system of equations $\mathcal{A}x=b$ and $\mathcal{A}({x+\Delta x})={b+\Delta b}$, we know that
	\begin{equation}\label{Dec23-2}
	\frac{{\left\| {\Delta x} \right\|_2}}{{\left\| x \right\|_2}} \le \mathrm{cond}(\mathcal{A})\frac{{\left\| {\Delta b} \right\|_2}}{{\left\| b \right\|_2}}.
	\end{equation}
	Also, under the assumption $\left\| {{\mathcal{A}^{ - 1}}} \right\|_2\left\| {\Delta \mathcal{A}} \right\|_2 < 1$, for the linear system of equations
	\[({\mathcal{A}+\Delta \mathcal{A}})({x+\Delta x})={b+\Delta b},\]
	the following result exists in the literature
	\[\frac{{\left\| {\Delta x} \right\|_2}}{{\left\| x \right\|_2}} \le \frac{{\mathrm{cond}(\mathcal{A})}}{{1 - \mathrm{cond}(\mathcal{A})\frac{{\left\| {\Delta \mathcal{A}} \right\|_2}}{{\left\| \mathcal{A} \right\|_2}}}} \left\{
	\frac{{\left\| {\Delta \mathcal{A}} \right\|_2}}{{\left\| \mathcal{A} \right\|_2}} +\frac{{\left\| {\Delta b} \right\|_2}}{{{\left\| b \right\|}_2}}  \right\},\]
	one may refer to \cite{Golubbook} for further details about perturbation analysis for linear system of equations.
	
	\noindent In \cite{Najafi}, some lower and upper bounds for  $\tilde{\mathcal{A}}$ has been derived under certain conditions. Therefore, in the sequel, we assume that $\mathcal{A}$ is invertible and limit the discussions to deriving bounds for  $cond(\mathcal{A})$.
	
	\noindent In \cite{Xu}, it is shown that
	\[cond(\mathcal{A}) \ge \frac{{{{\max }_{{\lambda _{{i_k}}} \in \sigma ({A^{(k)}})}}\left| {1 - {\lambda _{{i_1}}}{\lambda _{{i_2}}} \ldots {\lambda _{{i_N}}}} \right|}}{{{{\max }_{{\lambda _{{i_k}}} \in \sigma ({A^{(k)}})}}\left| {1 - {\lambda _{{i_1}}}{\lambda _{{i_2}}} \ldots {\lambda _{{i_N}}}} \right|}}.\]
	Furthermore, for the case $\|\mathcal{A}\|_2 < 1$, the following upper bound for the condition number is also presented
	\[cond(\mathcal{A}) \le \frac{{1 + \prod\nolimits_{i = 1}^N {{{\| {{A^{(i)}}} \|}_2}} }}{{1 - \prod\nolimits_{i = 1}^N {{{\| {{A^{(i)}}}\|}_2}} }}.\]
	
	\noindent Now, we start our results by establishing the following proposition which presents
	an upper bound for the condition number of $\mathcal{A}$ under certain condition.
	
	\begin{proposition}\label{prop1}
		Assume that  $\prod\nolimits_{i = 1}^N {\sigma _{\min }} ({A^{(i)}})>1$, then
		\[cond(\mathcal{A}) \le \left(\frac{\prod\nolimits_{i = 1}^N {\sigma _{\min }} ({A^{(i)}}){ }}{{\prod\nolimits_{i = 1}^N {\sigma _{\min }} ({A^{(i)}}){ } - 1 }}\right) 	\left(1+\prod\nolimits_{i = 1}^N {{{\| {{A^{(i)}}} \|}_2}}\right) .\]
	\end{proposition}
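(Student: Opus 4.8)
The plan is to pass to the Kronecker representation $\mathcal{A} = I - \mathcal{B}$, where $\mathcal{B} := A^{(N)} \otimes \cdots \otimes A^{(2)} \otimes A^{(1)}$, and to bound the two factors $\|\mathcal{A}\|_2$ and $\|\mathcal{A}^{-1}\|_2$ of $\mathrm{cond}(\mathcal{A})$ separately. The single ingredient that drives everything is the behaviour of singular values under Kronecker products: writing SVDs $A^{(i)} = U_i \Sigma_i V_i^T$ and using that $\bigotimes_i U_i$ and $\bigotimes_i V_i$ are orthogonal while $\bigotimes_i \Sigma_i$ is diagonal, the singular values of $\mathcal{B}$ are exactly the products $\prod_i \sigma_{j_i}(A^{(i)})$. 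In particular $\sigma_{\max}(\mathcal{B}) = \prod_{i=1}^N \|A^{(i)}\|_2$ and $\sigma_{\min}(\mathcal{B}) = \prod_{i=1}^N \sigma_{\min}(A^{(i)})$.

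For the numerator factor I would simply use the triangle inequality: $\|\mathcal{A}\|_2 = \|I - \mathcal{B}\|_2 \le 1 + \|\mathcal{B}\|_2 = 1 + \prod_{i=1}^N \|A^{(i)}\|_2$, which already reproduces the second parenthesis of the asserted bound.

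The main work is the bound on $\|\mathcal{A}^{-1}\|_2 = 1/\sigma_{\min}(\mathcal{A})$, i.e. a lower bound on the smallest singular value of $I - \mathcal{B}$. Here I would argue directly: for any unit vector $x$, the reverse triangle inequality gives $\|(I-\mathcal{B})x\|_2 \ge \|\mathcal{B}x\|_2 - \|x\|_2 \ge \sigma_{\min}(\mathcal{B}) - 1$, whence $\sigma_{\min}(\mathcal{A}) \ge \sigma_{\min}(\mathcal{B}) - 1 = \prod_{i=1}^N \sigma_{\min}(A^{(i)}) - 1$. This is exactly where the hypothesis $\prod_i \sigma_{\min}(A^{(i)}) > 1$ enters: it makes the right-hand side positive, so that $\mathcal{A}$ is invertible (consistent with the standing assumption) and $\|\mathcal{A}^{-1}\|_2 \le 1/(\prod_i \sigma_{\min}(A^{(i)}) - 1)$. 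An equivalent route is the Neumann series applied to $\mathcal{A}^{-1} = -(I - \mathcal{B}^{-1})^{-1}\mathcal{B}^{-1}$, using $\|\mathcal{B}^{-1}\|_2 = 1/\sigma_{\min}(\mathcal{B}) < 1$.

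Multiplying the two estimates yields $\mathrm{cond}(\mathcal{A}) \le (1 + \prod_i \|A^{(i)}\|_2)/(\prod_i \sigma_{\min}(A^{(i)}) - 1)$, and since $\prod_i \sigma_{\min}(A^{(i)}) \ge 1$, inserting the factor $\prod_i \sigma_{\min}(A^{(i)})$ in the numerator gives precisely the stated inequality; in fact this derivation is slightly sharper than the claim. I expect the only genuine obstacle to be the lower bound on $\sigma_{\min}(I-\mathcal{B})$: the triangle-inequality step itself is clean, but one must be careful that it is $\sigma_{\min}$, and not $\sigma_{\max}$, of $\mathcal{B}$ that appears, and that the Kronecker factorisation is what licenses replacing $\sigma_{\min}(\mathcal{B})$ and $\sigma_{\max}(\mathcal{B})$ by the products over the individual factors.
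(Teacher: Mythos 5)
Your proposal is correct, and its overall skeleton coincides with the paper's: pass to the Kronecker form $\mathcal{A}=I-\mathcal{B}$ with $\mathcal{B}=A^{(N)}\otimes\cdots\otimes A^{(1)}$ (the paper calls it $\mathcal{F}$), bound $\|\mathcal{A}\|_2\le 1+\prod_{i=1}^N\|A^{(i)}\|_2$ by the triangle inequality, and then bound $\|\mathcal{A}^{-1}\|_2$ separately. The genuine difference is in the inverse bound. The paper uses the factorization $(I-\mathcal{F})^{-1}=-(I-\mathcal{F}^{-1})^{-1}\mathcal{F}^{-1}$ together with $\|\mathcal{F}^{-1}\|_2=\bigl(\prod_{i=1}^N\sigma_{\min}(A^{(i)})\bigr)^{-1}<1$ and the Neumann-series estimate $\|(I-\mathcal{F}^{-1})^{-1}\|_2\le (1-\|\mathcal{F}^{-1}\|_2)^{-1}$ --- precisely the ``equivalent route'' you mention only in passing. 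Your primary route instead bounds the smallest singular value directly: for unit $x$, $\|(I-\mathcal{B})x\|_2\ge\|\mathcal{B}x\|_2-\|x\|_2\ge\sigma_{\min}(\mathcal{B})-1$, using the Kronecker SVD to identify $\sigma_{\min}(\mathcal{B})=\prod_{i=1}^N\sigma_{\min}(A^{(i)})=:p$. This is more elementary (it never needs $\mathcal{B}^{-1}$ or the invertibility of the individual $A^{(i)}$ as a separate step) and it is sharper: you get $\|\mathcal{A}^{-1}\|_2\le 1/(p-1)$, whereas the paper, after discarding the factor $\|\mathcal{F}^{-1}\|_2\le 1$ in its chain of inequalities, only gets $p/(p-1)$; had it retained that factor, it would recover your estimate. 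Your final observation --- that multiplying the numerator by $p\ge 1$ recovers the stated inequality, so your derivation proves a slightly stronger statement --- is also correct.
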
	
	
	\begin{proof}
		For simplicity, let $\mathcal{F}={A^{(N)}} \otimes  \cdots  \otimes {A^{(1)}}$.	
		It is immediate to conclude that
		\begin{eqnarray}
		\nonumber \|\mathcal{A}\|_2 &\le& 1 + \| \mathcal{F} \|_2=  1 + \sqrt{\rho\left( \mathcal{F} \mathcal{F}^T\right)}\\
		\nonumber & = & 1+ {\prod\limits_{i = 1}^N {\sigma _{\max }} ({A^{(i)}})},\\
		& = & 1+ \prod\nolimits_{i = 1}^N {{{\| {{A^{(i)}}} \|}_2}}. \label{144}
		\end{eqnarray}	
		Evidently, we have $(I-\mathcal{F})^{-1} = -(I-\mathcal{F}^{-1})^{-1}\mathcal{F}^{-1}.$  It is well-known that
		\[
		\mathcal{F}^{-1}=({A^{(N)}})^{-1} \otimes  \cdots  \otimes ({A^{(1)}})^{-1}.
		\] From the above relation and the fact that
		\[
		\|\mathcal{F}^{-1}\|_2=\prod\nolimits_{i = 1}^N \|(A^{(i)})^{-1}\|_2= {(\prod\nolimits_{i = 1}^N {\sigma _{\min }} ({A^{(i)}}){ }})^{-1} < 1,
		\]
		we get,
		\[
		\| (I-\mathcal{F})^{-1}\|_2 \le \| (I-\mathcal{F}^{-1})^{-1}\|_2 \| \mathcal{F}^{-1}\|_2 \le \| (I-\mathcal{F}^{-1})^{-1}\|_2 \le \frac{1}{1-\|\mathcal{F}^{-1}\|_2 },
		\]
		Now we can conclude the result immediately.
	\end{proof}
	
	\noindent For deriving alternative bounds for $cond(\mathcal{A})$, we first prove the following two propositions.

	\begin{proposition}\label{p4a}
		Let $A^{(i)}\in\mathbb{R}^{n_i\times n_i}$ and  $x_i\in \mathbb{R}^{n_i}$ for $i=1,2,\ldots,\ell$, then
		\begin{equation}\label{eq62}
		(\mathop  \bigotimes \limits_{i = 1}^\ell x_i)^T\mathcal{H}(A^{(1)}\otimes A^{(2)}\otimes \ldots \otimes A^{(\ell)}) \mathop  \bigotimes \limits_{i = 1}^\ell x_i = \prod\limits_{i=1}^{\ell} {x_i^T\mathcal{H}(A^{(i)}) x_i}.
		\end{equation}
	\end{proposition}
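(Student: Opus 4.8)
The plan is to reduce both sides of \eqref{eq62} to a product of scalar quadratic forms, exploiting two elementary facts: first, that a quadratic form $v^T M v$ depends only on the symmetric part of $M$; and second, the mixed-product rule for Kronecker products, which lets a Kronecker-structured matrix act factorwise on a Kronecker-structured vector.

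First I would set $x := \bigotimes_{i=1}^\ell x_i$ and $M := A^{(1)} \otimes A^{(2)} \otimes \cdots \otimes A^{(\ell)}$, and observe that $x^T \mathcal{H}(M) x = x^T M x$. This holds because $x^T \mathcal{S}(M) x$ is a scalar equal to its own transpose $-x^T \mathcal{S}(M) x$, hence vanishes; equivalently, $x^T M^T x = (x^T M x)^T = x^T M x$. This replaces the left-hand side of \eqref{eq62} by the unsymmetrized quadratic form $\left(\bigotimes x_i\right)^T \left(\bigotimes A^{(i)}\right)\left(\bigotimes x_i\right)$.

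Next I would evaluate this form purely algebraically. Using the mixed-product property $(\bigotimes A^{(i)})(\bigotimes x_i) = \bigotimes (A^{(i)} x_i)$ together with $(\bigotimes x_i)^T = \bigotimes x_i^T$, the expression collapses to $\bigotimes_{i=1}^\ell (x_i^T A^{(i)} x_i)$. Since each factor $x_i^T A^{(i)} x_i$ is a scalar and the Kronecker product of scalars coincides with their ordinary product, this equals $\prod_{i=1}^\ell x_i^T A^{(i)} x_i$.

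Finally, I would apply the symmetric-part observation a second time, now to each individual factor, giving $x_i^T A^{(i)} x_i = x_i^T \mathcal{H}(A^{(i)}) x_i$ for every $i$. Substituting yields $\prod_{i=1}^\ell x_i^T \mathcal{H}(A^{(i)}) x_i$, exactly the right-hand side of \eqref{eq62}. I do not expect a genuine obstacle; the only point needing care is the repeated use of the mixed-product rule across $\ell$ factors, which can be organized as an induction on $\ell$ with base case $\ell = 2$ being the identity $(x_1 \otimes x_2)^T (A^{(1)} \otimes A^{(2)})(x_1 \otimes x_2) = (x_1^T A^{(1)} x_1)(x_2^T A^{(2)} x_2)$, but no individual step is delicate.
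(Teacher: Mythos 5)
Your proof is correct, and it takes a genuinely different route from the paper's. The paper proceeds by induction on $\ell$: the base case $\ell=2$ invokes the identity $\mathcal{H}(A^{(1)}\otimes A^{(2)}) = \mathcal{H}(A^{(1)})\otimes \mathcal{H}(A^{(2)})-\mathcal{S}(A^{(1)})\otimes\mathcal{S}(A^{(2)})$ (cited from the literature) together with $x_i^T\mathcal{S}(A^{(i)})x_i=0$, and the inductive step regroups $A^{(1)}\otimes\bigl(A^{(2)}\otimes\cdots\otimes A^{(k+1)}\bigr)$ so the base case can be applied. You instead strip the symmetrization off entirely at the start, via $x^T\mathcal{H}(M)x = x^T M x$, then collapse $\bigl(\bigotimes x_i\bigr)^T\bigl(\bigotimes A^{(i)}\bigr)\bigl(\bigotimes x_i\bigr)$ to $\prod_i x_i^T A^{(i)} x_i$ by the mixed-product rule, and reinsert $\mathcal{H}$ factorwise at the end. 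Your argument buys elementarity and transparency: it needs no structural decomposition of $\mathcal{H}$ of a Kronecker product and no external citation, and it makes plain that the symmetrization on both sides of \eqref{eq62} is cosmetic --- the identity is really the mixed-product evaluation of an unsymmetrized quadratic form. What the paper's route buys is consistency with the $\mathcal{H}/\mathcal{S}$ calculus it uses throughout Section \ref{Sec2}, but as a proof of this proposition it is strictly heavier; the only induction your version needs is the routine one hidden in the $\ell$-fold mixed-product rule, which you correctly flag as standard.
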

	
	\begin{proof}
		We prove the assertion by induction. For $\ell=2$, using the fact that $x_i^T\mathcal{S}(A^{(i)})x_i=0$ (for $i=1,2$),  we can conclude the result from the following equality (see \cite{Zak})
		\[
		\mathcal{H}(A^{(1)}\otimes A^{(2)}) = \mathcal{H}(A^{(1)})\otimes \mathcal{H}(A^{(2)})-
		\mathcal{S}(A^{(1)})\otimes\mathcal{S}(A^{(2)}).
		\]
		Assume that \eqref{eq62} is true for $\ell=k$. Now for $\ell=k+1$, setting
		\[
		\mathcal{Y}_k=\mathop  \bigotimes \limits_{i = 2}^{(k+1)} x_i \quad \mathcal{Y}_{k+1}=x_1\otimes\mathcal{Y}_{k}, \quad \text{and} \quad  \mathcal{A}_k=A^{(2)}\otimes \ldots \otimes A^{(k+1)},
		\]
		we get
		\begin{eqnarray*}
			\mathcal{Y}_{k+1}^T \mathcal{H}(A^{(1)}\otimes A^{(2)}\otimes \ldots \otimes A^{(\ell)})\mathcal{Y}_{k+1} & =&(x_1 \otimes \mathcal{Y}_k)^T\mathcal{H}(A^{(1)}\otimes \mathcal{A}_k)(x_1 \otimes \mathcal{Y}_k)\\
			& = & (x_1^T \mathcal{H}(A^{(1)}) x_1) \times (\mathcal{Y}_k^T\mathcal{H}(\mathcal{A}_k)\mathcal{Y}_k).
		\end{eqnarray*}
		Using the assumption of induction for the term $\mathcal{Y}_k^T\mathcal{H}(\mathcal{A}_k)\mathcal{Y}_k$, we can conclude the result  immediately.
	\end{proof}

	\begin{proposition}\label{prop2}
		Assume that $\mathcal{A}=I - {A^{(N)}} \otimes  \cdots  \otimes {A^{(2)}} \otimes {A^{(1)}}$. Then,
		\begin{equation}
		{\lambda _{\max }}(\mathcal{A}\mathcal{A}^T) \ge 1 + 	\prod\nolimits_{i = 1}^N {\sigma _{\max }^2} ({A^{(i)}})-2\prod\nolimits_{i = 1}^N {y_i^T\mathcal{H}} ({A^{(i)}}){y_i},
		\end{equation}
		and
		\begin{equation}
		{\lambda _{\min }}(\mathcal{A}\mathcal{A}^T) \le 1 + 	\prod\nolimits_{i = 1}^N {\sigma _{\min }^2} ({A^{(i)}}) -2 	\prod\nolimits_{i = 1}^N {z_i^T\mathcal{H}} ({A^{(i)}}){z_i},
		\end{equation}
		where $A^{(i)}(A^{(i)})^Tz_i = \sigma^2_{\min}(A^{(i)}) z_i$ and	 $A^{(i)}(A^{(i)})^Ty_i = \sigma^2_{\max}(A^{(i)}) y_i$ with ${\| z_i\|}_2=1$ and ${\| y_i\|}_2=1$ for
		$i=1,2,\ldots,N$.
	\end{proposition}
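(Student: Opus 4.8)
The plan is to use the Rayleigh-quotient characterization of the extreme eigenvalues of the symmetric positive semidefinite matrix $\mathcal{A}\mathcal{A}^T$, in combination with Proposition~\ref{p4a}. Writing $\mathcal{F}=A^{(N)}\otimes\cdots\otimes A^{(1)}$ so that $\mathcal{A}=I-\mathcal{F}$, I would first expand
\[
\mathcal{A}\mathcal{A}^T=(I-\mathcal{F})(I-\mathcal{F}^T)=I-2\mathcal{H}(\mathcal{F})+\mathcal{F}\mathcal{F}^T.
\]
The structural fact driving everything is the mixed-product rule for the Kronecker product, which gives $\mathcal{F}\mathcal{F}^T=\bigl(A^{(N)}(A^{(N)})^T\bigr)\otimes\cdots\otimes\bigl(A^{(1)}(A^{(1)})^T\bigr)$. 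Thus each of the three summands is either the identity or an $N$-fold Kronecker product whose action on a Kronecker-structured vector factors cleanly.

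For the first inequality I would test against the unit vector $y=y_N\otimes\cdots\otimes y_1$ assembled from the prescribed maximal singular vectors and use $\lambda_{\max}(\mathcal{A}\mathcal{A}^T)\ge y^T\mathcal{A}\mathcal{A}^Ty$. Term by term: $y^Ty=\prod_i\|y_i\|_2^2=1$; since $y_i$ is the eigenvector of $A^{(i)}(A^{(i)})^T$ for the eigenvalue $\sigma_{\max}^2(A^{(i)})$, the mixed-product form of $\mathcal{F}\mathcal{F}^T$ yields $y^T\mathcal{F}\mathcal{F}^Ty=\prod_i\sigma_{\max}^2(A^{(i)})$; and Proposition~\ref{p4a} gives $y^T\mathcal{H}(\mathcal{F})y=\prod_i y_i^T\mathcal{H}(A^{(i)})y_i$. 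Adding these three contributions produces exactly the claimed lower bound on $\lambda_{\max}(\mathcal{A}\mathcal{A}^T)$. The second inequality is entirely parallel: I would instead test against $z=z_N\otimes\cdots\otimes z_1$ built from the minimal singular vectors and use $\lambda_{\min}(\mathcal{A}\mathcal{A}^T)\le z^T\mathcal{A}\mathcal{A}^Tz$, evaluating the three terms in the same way to obtain the stated upper bound on $\lambda_{\min}(\mathcal{A}\mathcal{A}^T)$.

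I do not expect a genuine analytic obstacle here; the only point that demands care is the ordering of the factors in the Kronecker test vectors. Because $\mathcal{F}$ is written with the indices descending, $A^{(N)}\otimes\cdots\otimes A^{(1)}$, the test vectors must be assembled in the matching order $y_N\otimes\cdots\otimes y_1$ (respectively $z_N\otimes\cdots\otimes z_1$) so that each singular vector lines up with its own block $A^{(i)}(A^{(i)})^T$ and so that the hypothesis of Proposition~\ref{p4a} applies verbatim. Once the bookkeeping of this ordering is fixed, the two bounds follow immediately, the product $\prod_i y_i^T\mathcal{H}(A^{(i)})y_i$ (and its $z$-analogue) being independent of the order in which the factors are listed.
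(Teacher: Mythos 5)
Your proposal is correct and follows essentially the same route as the paper: expand $\mathcal{A}\mathcal{A}^T=I+\mathcal{F}\mathcal{F}^T-2\mathcal{H}(\mathcal{F})$, test against the Kronecker-structured unit vectors $y_N\otimes\cdots\otimes y_1$ and $z_N\otimes\cdots\otimes z_1$, and invoke Proposition~\ref{p4a} together with the Rayleigh-quotient bounds on $\lambda_{\max}$ and $\lambda_{\min}$. You are in fact slightly more careful than the paper, which leaves the mixed-product evaluation of the $\mathcal{F}\mathcal{F}^T$ term implicit (and contains a typo, writing $A_1A_N^T$ for $A_1A_1^T$ in its expansion).
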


	\begin{proof}
		It is not difficult to verify that
		\begin{eqnarray}
		\nonumber \mathcal{A}{\mathcal{A}^T} & = & (I - {A_N} \otimes  \cdots  \otimes {A_1})(I - A_N^T \otimes  \cdots  \otimes A_1^T)\\
		& = &  I + {A_N}A_N^T \otimes  \cdots  \otimes {A_1}A_N^T - 2\mathcal{H}({A_N} \otimes  \cdots  \otimes {A_1}). \label{141}
		\end{eqnarray}	
		Setting $\mathcal{Y}=(y_N\otimes \cdots \otimes  y_1)$  and $\mathcal{Z}=(z_N\otimes \cdots \otimes  z_1)$, in view of Proposition \ref{p4a}, we obtain
		\[
		\mathcal{Y}^T\mathcal{A}{\mathcal{A}^T}\mathcal{Y} = 1 + 	\prod\nolimits_{i = 1}^N {\sigma _{\max }^2} ({A^{(i)}})-2\prod\nolimits_{i = 1}^N {y_i^T\mathcal{H}} ({A^{(i)}}){y_i},
		\]
		and
		\[
		\mathcal{Z}^T\mathcal{A}{\mathcal{A}^T}\mathcal{Z} = 1 + 	\prod\nolimits_{i = 1}^N {\sigma _{\min }^2} ({A^{(i)}}) -2 	\prod\nolimits_{i = 1}^N {z_i^T\mathcal{H}} ({A^{(i)}}){z_i},
		\]
		which completes the proof immediately.
	\end{proof}
	
	\begin{remark} If the matrices $A^{(i)}$s for $i=1,2,\ldots,N$ are all positive definite, then
		\[
		{\lambda _{\min }}(\mathcal{A}\mathcal{A}^T) \le 1+ 	\prod\nolimits_{i = 1}^N {\sigma _{\min }^2} ({A^{(i)}}).
		\]
		Furthermore, if we have 	
		\[
		\prod\nolimits_{i = 1}^N {\sigma _{\max }^2} ({A^{(i)}})\ge 2\prod\nolimits_{i = 1}^N \lambda _{\max }({\mathcal{H}} ({A^{(i)}})),
		\]
		then the following upper bound can be derived immediately from Proposition \ref{prop2},
		\[cond(\mathcal{A}) \ge \frac{\sqrt{1+\prod\nolimits_{i = 1}^N {\sigma _{\max }^2} ({A^{(i)}})- 2\prod\nolimits_{i = 1}^N \lambda _{\max }({\mathcal{H}} ({A^{(i)}}))}}{\sqrt{1+ 	\prod\nolimits_{i = 1}^N {\sigma _{\min }^2} ({A^{(i)}}) }}\ge \frac{1}{\sqrt{1+ 	\prod\nolimits_{i = 1}^N {\sigma _{\min }^2} ({A^{(i)}}) }}.\]
	\end{remark}
	
\noindent Here we recall a useful proposition which is a consequence of Weyl's Theorem, see \cite[Theorem 4.3.1]{Horn}.
	
	\begin{proposition} \label{prop2.1}
		Suppose that $ A, B \in\mathbb{R}^{n\times n} $ are two symmetric matrices. Then,
		\begin{align*}
		\lambda_{\max}(A + B) &\leq \lambda_{\max}(A) +  \lambda_{\max}(B),\\
		\lambda_{\min}(A + B) &\geq \lambda_{\min}(A) + \lambda_{\min}(B).
		\end{align*}
	\end{proposition}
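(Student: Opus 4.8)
Since the statement is explicitly flagged as a consequence of Weyl's Theorem, the shortest route is to specialize the Weyl inequalities cited from \cite[Theorem 4.3.1]{Horn}. Ordering the eigenvalues of a symmetric matrix in nonincreasing fashion, Weyl's inequalities contain as their two extreme special cases the bounds $\lambda_1(A+B)\le\lambda_1(A)+\lambda_1(B)$ and $\lambda_n(A+B)\ge\lambda_n(A)+\lambda_n(B)$. Since $\lambda_1=\lambda_{\max}$ and $\lambda_n=\lambda_{\min}$, these are exactly the two claimed relations, so on this route the proof reduces to identifying the correct index choice in the general inequality.

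The self-contained argument I would actually write out uses the Rayleigh--Ritz characterization of the extreme eigenvalues of a symmetric matrix: for symmetric $M$ one has $\lambda_{\max}(M)=\mathop{\max}\limits_{\|x\|_2=1}x^TMx$ and $\lambda_{\min}(M)=\mathop{\min}\limits_{\|x\|_2=1}x^TMx$. Applying the first identity to $M=A+B$ and splitting the quadratic form as $x^T(A+B)x=x^TAx+x^TBx$, I would bound each summand over the unit sphere by its own supremum to obtain
\[
\mathop{\max}\limits_{\|x\|_2=1}x^T(A+B)x\le \mathop{\max}\limits_{\|x\|_2=1}x^TAx+\mathop{\max}\limits_{\|x\|_2=1}x^TBx,
\]
which is precisely $\lambda_{\max}(A+B)\le\lambda_{\max}(A)+\lambda_{\max}(B)$. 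The companion bound $\lambda_{\min}(A+B)\ge\lambda_{\min}(A)+\lambda_{\min}(B)$ follows from the same decomposition together with the fact that the infimum of a sum over a fixed set is at least the sum of the separate infima.

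There is no genuine obstacle here; the statement is elementary. The only conceptual point worth stressing is why inequalities, rather than equalities, arise: the unit vector attaining $\lambda_{\max}(A+B)$ need not maximize $x^TAx$ or $x^TBx$ individually, so replacing the joint extremum by the two separate extrema can only enlarge the right-hand side, and symmetrically for the minimum. This is exactly what makes the bounds sharp in general yet not exact, and it is the property that will later be invoked together with the Kronecker-structure identities of Propositions \ref{p4a} and \ref{prop2} to estimate $\mathrm{cond}(\mathcal{A})$.
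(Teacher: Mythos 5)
Your proposal is correct. Be aware, though, that the paper does not actually write out a proof of this proposition at all: it merely \emph{recalls} the statement as a consequence of Weyl's theorem, with a pointer to \cite[Theorem~4.3.1]{Horn}. Your first paragraph reproduces exactly that route (the two claims are the extreme index choices $i=j=1$ and $i=j=n$ in Weyl's inequalities $\lambda_{i+j-1}(A+B)\le\lambda_i(A)+\lambda_j(B)$ and $\lambda_{i+j-n}(A+B)\ge\lambda_i(A)+\lambda_j(B)$), so on that level you match the paper. What you add is a complete, self-contained argument via the Rayleigh--Ritz characterization: writing $\lambda_{\max}(A+B)=\mathop{\max}\limits_{\|x\|_2=1}x^T(A+B)x$, splitting the quadratic form, and bounding each term by its own extremum, with the symmetric argument for $\lambda_{\min}$. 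This is valid (the sup of a sum is at most the sum of the sups; the inf of a sum is at least the sum of the infs), and it is strictly more than the paper provides: it avoids invoking the full interlacing-type machinery of Weyl's theorem, needing only the variational characterization of the extreme eigenvalues of a symmetric matrix. The trade-off is the usual one --- the citation buys brevity and situates the bounds in a more general family, while your variational proof buys elementarity and makes the section self-contained; your closing remark on why equality fails in general (the maximizer of $x^T(A+B)x$ need not maximize $x^TAx$ or $x^TBx$ separately) is a correct and useful observation that the paper omits.
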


\noindent 	Using Proposition \ref{prop2.1} and some straightforward algebraic computations, we can prove the following result.
	
	\begin{proposition}
		Let $\mathcal{F}={A^{(N)}} \otimes  \cdots  \otimes {A^{(1)}}.$  Assume that $r$ is an even number and $\lambda \in \sigma (\mathcal{H}(\mathcal{F}))$, then
		\[\left| {\lambda (\mathcal{H}(\mathcal{F}))} \right| \le \sum\limits_{r = 0}^N {\frac{{N!}}{{r!\left( {N - r} \right)!}}M_s^rM_H^{N - r}} \le (M_r+M_H)^N. \]
		where
		\[{M_S} = \mathop {\max }\limits_{i = 1,2,...,N} {\| {\mathcal{S}({A^{(i)}})} \|_2}\quad \text{and} \quad {M_H} = \mathop {\max }\limits_{i = 1,2,...,N} {{\rho(\mathcal{H}({A^{(i)}}))}}. \]
		Here, for a given matrix $W$, the notation $\rho(W)$ stands for the spectral radius of $W$.
	\end{proposition}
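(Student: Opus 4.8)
The plan is to expand $\mathcal{F}=A^{(N)}\otimes\cdots\otimes A^{(1)}$ through the additive splitting $A^{(i)}=\mathcal{H}(A^{(i)})+\mathcal{S}(A^{(i)})$ in every factor and then isolate its symmetric part. Writing
\[
\mathcal{F}=\bigotimes_{i=1}^{N}\bigl(\mathcal{H}(A^{(i)})+\mathcal{S}(A^{(i)})\bigr)=\sum_{S\subseteq\{1,\dots,N\}} B_1^S\otimes\cdots\otimes B_N^S,
\]
where $B_i^S=\mathcal{S}(A^{(i)})$ if $i\in S$ and $B_i^S=\mathcal{H}(A^{(i)})$ otherwise, I would note that transposing a single term multiplies it by $(-1)^{|S|}$, since each symmetric factor is fixed and each skew factor flips sign. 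Hence taking the symmetric part annihilates every term of odd cardinality and leaves
\[
\mathcal{H}(\mathcal{F})=\sum_{|S|\ \mathrm{even}} B_1^S\otimes\cdots\otimes B_N^S=:\sum_{|S|\ \mathrm{even}} T_S,
\]
a finite sum of \emph{symmetric} matrices. This is the $N$-factor generalization of the two-factor identity already exploited in Proposition \ref{p4a}.

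Next, since each surviving $T_S$ is symmetric, I would apply Proposition \ref{prop2.1} repeatedly to this sum, obtaining $\lambda_{\max}(\mathcal{H}(\mathcal{F}))\le\sum_{|S|\,\mathrm{even}}\lambda_{\max}(T_S)$ and $\lambda_{\min}(\mathcal{H}(\mathcal{F}))\ge\sum_{|S|\,\mathrm{even}}\lambda_{\min}(T_S)$. Because $\max\{|\lambda_{\max}(T_S)|,|\lambda_{\min}(T_S)|\}\le\rho(T_S)=\|T_S\|_2$, both one-sided sums are dominated by $\sum_{|S|\,\mathrm{even}}\rho(T_S)$, so the whole spectrum of $\mathcal{H}(\mathcal{F})$ lies in $[-\sum\rho(T_S),\ \sum\rho(T_S)]$; that is, every $\lambda\in\sigma(\mathcal{H}(\mathcal{F}))$ satisfies $|\lambda|\le\sum_{|S|\,\mathrm{even}}\rho(T_S)$.

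To estimate each $\rho(T_S)$, I would invoke the fact that the spectrum of a Kronecker product is the set of products of the eigenvalues of its factors, whence $\rho(T_S)=\prod_{i}\rho(B_i^S)$. For a skew factor, $\mathcal{S}(A^{(i)})$ is normal, so $\rho(\mathcal{S}(A^{(i)}))=\|\mathcal{S}(A^{(i)})\|_2\le M_S$; for a symmetric factor, $\rho(\mathcal{H}(A^{(i)}))\le M_H$ by definition. Thus $\rho(T_S)\le M_S^{|S|}M_H^{N-|S|}$, and summing over all even-cardinality subsets, of which there are $\binom{N}{r}$ of each even size $r$, yields
\[
|\lambda|\le\sum_{r\ \mathrm{even}}\binom{N}{r}M_S^{r}M_H^{N-r}\le\sum_{r=0}^{N}\binom{N}{r}M_S^{r}M_H^{N-r}=(M_S+M_H)^{N},
\]
the last equality being the binomial theorem.

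The main obstacle is the bookkeeping of step one: correctly tracking the transpose signs to show that exactly the even-cardinality terms constitute $\mathcal{H}(\mathcal{F})$, and then justifying $\rho(\mathcal{S}(A^{(i)}))=\|\mathcal{S}(A^{(i)})\|_2$ via normality of real skew-symmetric matrices. Once the even-subset decomposition is in hand, the remainder is a routine combination of Weyl's inequality (Proposition \ref{prop2.1}), the multiplicativity of the spectral radius under Kronecker products, and the binomial expansion.
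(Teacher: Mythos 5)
Your proof is correct and is essentially the paper's own approach: the paper gives no detailed argument for this proposition, saying only that it follows from Proposition \ref{prop2.1} (Weyl's inequality) and ``straightforward algebraic computations,'' and your even-subset Kronecker expansion, repeated Weyl bounds, spectral-radius multiplicativity, and binomial-theorem finish are precisely those computations carried out in full. As a bonus, your intermediate bound (the sum restricted to even $r$) is slightly sharper than the stated one and implicitly repairs the statement's typos ($M_S$ in place of $M_s$ and $M_r$, and the stray hypothesis that ``$r$ is even'').
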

	
	\begin{remark}
		A simple conclusion of the above proposition is that if $M_r+M_H < 1 $ then
		the matrix $\mathcal{A}$ is positive definite, i.e., $\mathcal{H}(\mathcal{A})$ is a symmetric positive definite. In this case, we can obtain an upper bound for $\|\mathcal{A}^{-1}\|_2$. In fact, from \eqref{141}, it can be seen that
		\begin{eqnarray*}
			{\lambda _{\min }}(\mathcal{A}{\mathcal{A}^T}) &\ge& 1 + \prod\limits_{i = 1}^N {\sigma _{\min }^2} ({A^{(i)}}) - 2{\lambda _{\min }}(\mathcal{H}({A^{(N)}} \otimes  \cdots  \otimes {A^{(1)}}))\\
			&\ge& 1 + \prod\limits_{i = 1}^N {\sigma _{\min }^2} ({A^{(i)}}) - {({M_S} + {M_H})^N}\\
			&\ge& \prod\limits_{i = 1}^N {\sigma _{\min }^2} ({A^{(i)}}).
		\end{eqnarray*}
		Therefore, we have
		\begin{equation}\label{142}
		\|\mathcal{A}^{-1}\|_2 \le \frac{1}{{\prod\limits_{i = 1}^N {\sigma _{\min }}. ({A^{(i)}})}}
		\end{equation}
		Now, in view of inequality \eqref{144} together with \eqref{142} gives an upper bound for the condition number of $\mathcal{A}$ as follows:
		\[
		cond(\mathcal{A})  \le \frac{1+ {\prod\limits_{i = 1}^N {\sigma _{\max }} ({A^{(i)}})}}{{\prod\limits_{i = 1}^N {\sigma _{\min }} ({A^{(i)}})}}.
		\]
	\end{remark}
	
\noindent	We end this part by the following remark which is an observation for the case
	that $A^{(i)}$'s for $i=1,2,\ldots,N$ are all diagonalizable.

	\begin{remark}
		Let $A^{(i)}$ be a diagonalizable matrix, i.e, there exists nonsingular matrix $S_i$
		associated with $A^{(i)}$ such that $A^{(i)}=S_iD_iS_i^{-1}$ for $i=1,2,\ldots,N$. Setting $\mathcal{S}={S_N} \otimes  \cdots  \otimes {S_1}$, we have
		$\mathcal{A}=\mathcal{S}(I-D_N \otimes  \cdots  \otimes D_1)\mathcal{S}^{-1}$. Hence, if $1\notin \sigma ({A^{(N)}} \otimes  \cdots  \otimes {A^{(1)}} )$ then
		\[
		\mathcal{A}^{-1}=  \mathcal{S}^{-1} (I-D_N \otimes  \cdots  \otimes D_1)^{-1}\mathcal{S}.
		\]
		As a result, we get
		\[
		\|\mathcal{A}^{-1} \|_2 \le \prod\limits_{i = 1}^N \|S_i^{-1}\|_2\|S_i\|_2 M_D=\prod\limits_{i = 1}^N cond(S_i) M_D,
		\]
		where
		\[M_D=\max \left\{ {\frac{1}{{\left| {1 - {\lambda _{\min }}({D_N} \otimes  \cdots  \otimes {D_1})} \right|}},\frac{1}{{\left| {1 - {\lambda _{\max }}({D_N} \otimes  \cdots  \otimes {D_1})} \right|}}} \right\}.\]
		In this case, we have the following inequality
		\[
		cond(\mathcal{A})  \le \prod\limits_{i = 1}^N cond(S_i) (1+ \prod\limits_{i = 1}^N {\sigma _{\max }}({A^{(i)}})) M_D.
		\]
		Notice that analogous to the proof of Proposition \ref{prop1}, in the case that
		$\prod\limits_{i = 1}^N \|D_i^{-1}\|_2 <1$, we have
		\[
		cond(\mathcal{A})  \le \prod\limits_{i = 1}^N cond(S_i) (1+ \prod\limits_{i = 1}^N {\sigma _{\max }}({A^{(i)}}))\times \frac{1}{1-\prod\limits_{i = 1}^N \|D_i^{-1}\|_2}.
		\]
		In addition, with the similar strategy used in \cite{Xu}, if $\prod\limits_{i = 1}^N \|D_i\|_2 <1$ then
		\[
		cond(\mathcal{A})  \le \prod\limits_{i = 1}^N cond(S_i) (1+ \prod\limits_{i = 1}^N {\sigma _{\max }}({A^{(i)}}))\times \frac{1}{1-\prod\limits_{i = 1}^N \|D_i\|_2}.
		\]
		
		Finally, comment that if the matrices $D_i$ are all positive definite matrices ($i=1,2,\ldots,N$) then
		\[
		{\lambda _{\min }}({D_N} \otimes  \cdots  \otimes {D_1}) = \prod\limits_{i = 1}^N {\lambda _{\min }(D_i)} \quad \text{and} \quad {\lambda _{\max }}({D_N} \otimes  \cdots  \otimes {D_1}) = \prod\limits_{i = 1}^N {\lambda _{\max }(D_i)}.
		\]	
	\end{remark}

	\section{Tensor form of GKB and Gauss-type quadrature}\label{Sec3}
	In this section, we briefly describe the implantation of GKB process in tensor framework. For simplicity, in the sequel, we use two linear
	operators $ \tilde{\mathcal{M}}, {\mathcal{M}}:\mathbb{R}^{I_1\times I_2 \times \cdots \times I_N}   \to  \mathbb{R}^{I_1\times I_2 \times \cdots \times I_N} $ such that
	\begin{eqnarray}
	\tilde{\mathcal{M}}(\mathscr{X}) & := &\mathscr{X}\times_1 A^{(1)}+\mathscr{X}\times_2 A^{(2)}+\cdots+\mathscr{X}\times_N A^{(N)}, \label{linop1}\\
	\mathcal{M}(\mathscr{X}) &:=&\mathscr{X}-\mathscr{X}\times_1 A^{(1)}\times_2 A^{(2)}\ldots\times_N
	A^{(N)}. \label{linop2}
	\end{eqnarray}
	The adjoint of $\tilde{\mathcal{M}}$ and $\mathcal{M}$ are respectively given by
	\begin{eqnarray*}
		\tilde{\mathcal{M}}^*(\mathscr{Y}) & := &\mathscr{Y}\times_1 (A^{(1)})^T+\mathscr{Y}\times_2 (A^{(2)})^T+\cdots+\mathscr{Y}\times_N (A^{(N)})^T, \\
		\mathcal{M}^*(\mathscr{Y}) & := &\mathscr{Y}-\mathscr{Y}\times_1 (A^{(1)})^T\times_2 (A^{(2)})^T\ldots\times_N
		(A^{(N)})^T,
	\end{eqnarray*}
	for $\mathscr{Y} \in \mathbb{R}^{I_1\times I_2 \times \cdots \times I_N} $. Using the linear operators \eqref{linop1} and \eqref{linop2}, the tensor equations
	\eqref{eq1} and \eqref{eql1} are respectively written by
	\[\tilde{\mathcal{M}}(\mathscr{X})=\mathscr{D} \quad \text{and} \quad \mathcal{M}(\mathscr{X})=\mathscr{F}.\]
	We comment that all of the results in this section can be applied for any other linear operator from $\mathbb{R}^{I_1\times I_2 \times \cdots \times I_N}$  to $\mathbb{R}^{I_1\times I_2 \times \cdots \times I_N}.$
	
\noindent Consider the linear system of equation $Ax=b$ where $A\in \mathbb{R}^{n\times n}$. We recall that the well-known GKB process, applied to the matrix $A$, produces the decomposition  $V^TAU=T$ where $V$ and $U$ are orthogonal matrices and $T$ is a bidiagonal matrix.	
%
\noindent It is natural to use the process for an arbitrary linear operator over $\mathbb{R}^{I_1\times I_2 \times \cdots \times I_N}.$  The corresponding approach is called GKB based on tensor format (GKB$_{-}$BTF) which is summarized in Algorithm \ref{a2}.
	
	\RestyleAlgo{ruled}
	\LinesNumbered
	\begin{algorithm}
		\caption{GKB$_{-}$BTF process associated with linear operators ${\mathcal{M}}$ (and $\tilde{\mathcal{M}}$). \label{a2}}
		\textbf{Input:} Linear operator $\mathcal{M}$ ($\tilde{\mathcal{M}}$) and the right-hand side $\mathscr{F}$  ($\mathscr{D}$).\\
		Set $\beta_1=\|\mathscr{F}\|$, $\mathscr{V}_1=\frac{1}{\beta}\mathscr{F}$,  $\mathscr{U}={\mathcal{M}}^*(\mathscr{V}_1)$ , $\alpha_1=\|\mathscr{U}\|$, and $\mathscr{U}_1=\frac{1}{\alpha_1}\mathscr{U}$.\label{l1}\\
		\Begin{
			\For {$j=2,\ldots,m$}{
				
				$\mathscr{V}={\mathcal{M}}(\mathscr{U}_{j-1})-\alpha_{j-1}\mathscr{V}_{j-1}$\label{l5}\;
				$\beta_j=\|\mathscr{V}\|$\;\label{l6}
				\If{$\beta_j=0$}{Stop}
				$\mathscr{V}_j=\mathscr{V}/\beta_j$ \label{l10}\;
				$\mathscr{U}={\mathcal{M}}^*(\mathscr{V}_j)-\beta_j\mathscr{U}_{j-1}$\label{l11}\;
				$\alpha_j=\|\mathscr{U}\|$\;\label{l12}
				\If{$\alpha_j=0$}{Stop}
				$\mathscr{U}_{j}=\mathscr{U}/\alpha_j$\label{l16}\;
			}
		}
	\end{algorithm}

\noindent In  Algorithm \ref{a2}, suppose that $m=k+1$. Moreover, assume that there is no break-down in the algorithm and let $\bar{T}_k$ be an $(k+1)\times k$ lower bidiagonal matrix whose nonzero entries are those computed in Lines \ref{l6} and \ref{l12} of Algorithm \ref{a2}. In the following, the matrix $T_k$ stands for the $k\times k$ matrix extracted from $\bar{T}_k$ as follows:
	\[\bar{T}_k = \left( {\begin{array}{*{20}{c}}
		{{T_k}}\\
		{{\beta _{k + 1}}e_{k}^T}
		\end{array}} \right).\]

	\begin{theorem}\label{thm1}
		Let ${\tilde{\mathscr{V}}}_k$, ${\tilde{\mathscr{U}}}_k$, ${\tilde{\mathscr{W}}}_k$ and ${\tilde{\mathscr{W}}}^*_k$  be the $(N+1)$-mode tensors with frontal slices ${{\mathscr{V}}}_j$, ${{\mathscr{U}}}_j$, $\mathscr{W}_j:=\mathcal{M}(\mathscr{U}_j)$ and $\mathscr{W}_j^*:=\mathcal{M}^*(\mathscr{V}_j)$  for $j=1,\ldots,k$ computed by Algorithm \ref{a2}.
		Then the following statements hold
		\begin{eqnarray}
		\nonumber {\tilde{\mathscr{W}}}_k&=&{\tilde{\mathscr{V}}}_{k}\times_{(N+1)}T_k^T+\beta_{k+1}\mathscr{Z}\times_{(N+1)}E_k,\\
		& =& {\tilde{\mathscr{V}}}_{k+1}\times_{(N+1)}\bar{T}_k^T \label{moh}\\
		{\tilde{\mathscr{W}}}_k^*&=&\mathcal{\tilde{\mathscr{U}}}_k\times_{(N+1)}T_k,\label{moh1}
		\end{eqnarray}
		in which $\mathscr{Z}$ is an $(N+1)-$mode tensor with ``$k$" column tensors $0,\ldots,0,\mathscr{V}_{k+1}$ and $E_k$ is an $k\times k$ matrix of the form $E_k=[0,\ldots,0,e_k]$ where $e_k$ is the $k$th column of the identity matrix of order $k$.
	\end{theorem}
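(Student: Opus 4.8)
The plan is to establish all three identities \emph{frontal slice by frontal slice} (equivalently, column tensor by column tensor). Indeed, by Definition \ref{def1.1} applied in mode $N+1$, the $j$-th frontal slice of $\mathscr{T}\times_{(N+1)}M$ is the $N$-mode tensor $\sum_{\ell}\mathscr{T}_\ell\,M_{j\ell}$, where $\mathscr{T}_\ell$ denotes the $\ell$-th frontal slice of $\mathscr{T}$. Thus each tensor equation collapses to a finite family of identities between $N$-mode tensors, which I would read off directly from Algorithm \ref{a2}.

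First I would extract the governing recurrences. Lines \ref{l5}--\ref{l10} yield, for $i=1,\ldots,k$,
\[
\mathcal{M}(\mathscr{U}_i)=\alpha_i\mathscr{V}_i+\beta_{i+1}\mathscr{V}_{i+1},
\]
which are exactly the frontal slices $\mathscr{W}_i$ of $\tilde{\mathscr{W}}_k$. The initialization together with Lines \ref{l11}--\ref{l16} give $\mathcal{M}^*(\mathscr{V}_1)=\alpha_1\mathscr{U}_1$ and, for $j=2,\ldots,k$,
\[
\mathcal{M}^*(\mathscr{V}_j)=\beta_j\mathscr{U}_{j-1}+\alpha_j\mathscr{U}_j,
\]
which are the frontal slices $\mathscr{W}_j^*$ of $\tilde{\mathscr{W}}_k^*$.

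Next I would match these against the bidiagonal structure ($\alpha_j$ on the diagonal, $\beta_{j+1}$ on the subdiagonal). For the first expression for $\tilde{\mathscr{W}}_k$, the $j$-th slice of $\tilde{\mathscr{V}}_k\times_{(N+1)}T_k^T$ reads column $j$ of $T_k$ and equals $\alpha_j\mathscr{V}_j+\beta_{j+1}\mathscr{V}_{j+1}$ when $j<k$, but only $\alpha_k\mathscr{V}_k$ when $j=k$, since the square matrix $T_k$ lacks the subdiagonal entry $\beta_{k+1}$. The correction $\beta_{k+1}\mathscr{Z}\times_{(N+1)}E_k$ is supported on its last slice alone, where it contributes $\beta_{k+1}\mathscr{V}_{k+1}$; adding it restores the missing term and reproduces $\mathscr{W}_k$. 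For the second expression \eqref{moh} I would instead use the rectangular $\bar{T}_k$, whose extra row carries $\beta_{k+1}$, so that the boundary term at the last column is supplied automatically and no correction is needed. For \eqref{moh1}, the $j$-th slice of $\tilde{\mathscr{U}}_k\times_{(N+1)}T_k$ reads row $j$ of $T_k$, giving $\beta_j\mathscr{U}_{j-1}+\alpha_j\mathscr{U}_j$ (the first term absent when $j=1$), which matches $\mathscr{W}_j^*$.

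The main point of care --- and the only genuine obstacle --- is the transpose bookkeeping: $\tilde{\mathscr{W}}_k$ is assembled through $T_k^T$ (and $\bar{T}_k^T$) while $\tilde{\mathscr{W}}_k^*$ uses $T_k$, so the $j$-th slice reads a \emph{column} of the bidiagonal matrix in the former case and a \emph{row} in the latter. One must verify that in each case the entries $\alpha_\bullet,\beta_\bullet$ are paired with the correct $\mathscr{V}$'s and $\mathscr{U}$'s, and that the last-column term $\beta_{k+1}\mathscr{V}_{k+1}$ is accounted for exactly once --- encoded by the $\mathscr{Z}\times_{(N+1)}E_k$ correction in one form and by the extra row of $\bar{T}_k$ in the other. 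Beyond this, the argument is a direct substitution.
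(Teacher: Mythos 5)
Your proof is correct and takes essentially the same approach as the paper's: extract the two recurrences $\mathcal{M}(\mathscr{U}_j)=\alpha_j\mathscr{V}_j+\beta_{j+1}\mathscr{V}_{j+1}$ and $\mathcal{M}^*(\mathscr{V}_j)=\beta_j\mathscr{U}_{j-1}+\alpha_j\mathscr{U}_j$ from Algorithm \ref{a2}, then verify each identity frontal slice by frontal slice against the columns (for $T_k^T$, $\bar{T}_k^T$) and rows (for $T_k$) of the bidiagonal matrices. If anything, your treatment is more explicit than the paper's, which only displays the generic-slice computation for \eqref{moh} and leaves the boundary slice $j=k$, the role of the correction term $\beta_{k+1}\mathscr{Z}\times_{(N+1)}E_k$, and the verification of \eqref{moh1} implicit.
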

	
	\begin{proof}
		From Lines \ref{l5} and \ref{l10},
		\begin{equation}\label{171}
		\mathcal{M}(\mathscr{U}_{j-1})=\alpha_{j-1}\mathscr{V}_{j-1}+\beta_j\mathscr{V}_j.
		\end{equation}
		Note that the $(j-1)$th frontal slice of \eqref{moh} is given by
		\begin{eqnarray}
		\nonumber ({\tilde{\mathscr{V}}}_{k+1}\times_{(N+1)}T_k^T )_{:\ldots:(j-1)}&=&\sum\limits_{\ell=1}^{k} { \mathscr{V}_\ell (T_{k})_{\ell,j-1}}\\
		& = & \alpha_{j-1} \mathscr{V}_{j-1} + \beta_j \mathscr{V}_j. \label{172}
		\end{eqnarray}
		In view of \eqref{171} and \eqref{172}, we can conclude the validity of \eqref{moh}.
		To derive \eqref{moh1}, one may first notice that Lines \ref{l1}, \ref{l11} and \ref{l16} gives
		\begin{equation*}
		\mathcal{M}^*(\mathscr{V}_{j})=\beta_j\mathscr{U}_{j-1}+\alpha_{j}\mathscr{U}_{j},\quad j=1,2,\ldots,
		\end{equation*}
		where $\mathscr{U}_{-1}$ is assumed to be zero.
		Now considering the $j$ frontal slice of the right-hand side of \eqref{moh1}, we can deduce the second assertion.	
	\end{proof}
	
	\begin{remark}
		It is obvious that one may state the above theorem for any linear operator over $\mathbb{R}^{I_1\times I_2 \times \cdots \times I_N}$ instead of ${\mathcal{M}}(\cdot)$. In what follows, the results are stated for ${\mathcal{M}}(\cdot)$ and it should be commented that all of  results remain true, if we replace ${\mathcal{M}}(\cdot)$ by $\tilde{\mathcal{M}}(\cdot)$ or any other linear operators over $\mathbb{R}^{I_1\times I_2 \times \cdots \times I_N}$.
	\end{remark}
	
\noindent	Let the linear system associated with \eqref{eql1} be extremely ill-conditioned.
	In the case that the right-hand side of \eqref{eql1} contains some noise,  it is inefficient to approximate the solution of \eqref{eql1} without any regularization technique. To overcome this, we may use Tikhonov regularization which consists of
	solving the following minimization problem,
	\begin{equation}\label{eq33}
	\min\left\{  \left\| \mathcal{M}(\mathscr{X})-\mathscr{F}
	\right\|^2 +\mu\|\mathscr{X}\|^2 \right\},
	\end{equation}
	(over ${\mathscr{X}\in \mathbb{R}^{I_1\times I_2 \times \ldots \times I_N}}$) instead of solving $ \mathcal{M}(\mathscr{X})=\mathscr{F}$ in which $ \mu> 0 $ is called the regularization parameter.
	
	Let $\mathscr{X}_{k,\mu_k}= {\tilde{\mathscr{V}}}_{k}\bar{\times}_{(N+1)} y_{k,\mu_k}$
	be an approximate solution where $ {\tilde{\mathscr{V}}}_{k}$ is defined as before.
	From \eqref{moh}, by Lemma \ref{lem3.1} and Proposition \ref{p2}, we have
	
	\begin{eqnarray}
	\nonumber \left\| \mathcal{M}(\mathscr{X})-\mathscr{F}
	\right\| & = & \left\|  {\tilde{\mathscr{V}}}_{k+1}\times_{(N+1)}T_k^T\bar{\times}_{(N+1)} y_{k,\mu_k} -\mathscr{F}
	\right\| \\
	\nonumber & = & \left\|  {\tilde{\mathscr{V}}}_{k+1}\times_{(N+1)}T_k y_{k,\mu_k} -\mathscr{F}
	\right\|\\
	\nonumber  & = & \left\|   \tilde{\mathscr{V}}_{k+1}\boxtimes^{(N+1)} ({\tilde{\mathscr{V}}}_{k+1}\times_{(N+1)}\bar{T}_k y_{k,\mu_k} -\mathscr{F})
	\right\|_2\\
	\nonumber   & = & \left\|   (\tilde{\mathscr{V}}_{k+1}\boxtimes^{(N+1)} {\tilde{\mathscr{V}}}_{k+1})\bar{T}_k y_{k,\mu_k} -\tilde{\mathscr{V}}_{k+1}\boxtimes^{(N+1)}\mathscr{F}
	\right\|_2\\
	& = & \left\| \bar{T}_k y_{k,\mu_k} -\beta_1 e_1
	\right\|_2,\label{eq201}
	\end{eqnarray}
	which shows that \eqref{eq33} is equivalent to the following low dimensional minimization problem,

	\begin{eqnarray}
	\mathop {\min }\limits_{y \in {\mathbb{R}^k}}\left\{  \left\| \bar{T}_k y -\beta_1 e_1 \right\|_2^2 +\mu\|y\|^2_2 \right\}&=&\mathop {\min }\limits_{y \in {\mathbb{R}^k}} {\left\| {\left( {\begin{array}{*{20}{c}}
				{{{\bar{ T }}_k}}\\
				{\sqrt \mu  I}
				\end{array}} \right)y - \left( {\begin{array}{*{20}{c}}
				{{\beta _1}}\\
				0
				\end{array}} \right)} \right\|_2}.\label{eq33e}
	\end{eqnarray}
	As a result, the solution of \eqref{eq33e} is given by
	\[
	y_{k,\mu}=\beta_1 (\bar{T}_k^T\bar{T}_k+\mu I)^{-1} \bar{T}_k^Te_1.
	\]
	Consequently, we have
	\begin{eqnarray*}
		\left\| \bar{T}_k y_{k,\mu} -\beta_1 e_1 \right\|_2 & = & \left\| \beta_1\bar{T}_k  (\bar{T}_k^T\bar{T}_k+\mu I)^{-1} \bar{T}_k^Te_1 -\beta_1 e_1 \right\|_2\\
		& = & \left\| (\bar{T}_k  (\bar{T}_k^T\bar{T}_k+\mu I)^{-1} \bar{T}_k^T -I)\beta_1 e_1 \right\|_2\\
		& = & \left\|  (\mu^{-1}\bar{T}_k \bar{T}_k ^T+I)^{-1} \beta_1 e_1\right\|_2\\
		& = &\beta_1 \sqrt{e_1^T (\mu^{-1}\bar{T}_k \bar{T}_k ^T+I)^{-2} e_1}.
	\end{eqnarray*}
	Therefore, if we define the function $\phi_k (\mu)$ by
	\begin{equation}\label{eq20}
	\phi_k (\mu) = \beta_1^2 e_1^T (\mu^{-1}\bar{T}_k \bar{T}_k ^T+I)^{-2} e_1,
	\end{equation}
	we can conclude the following result.
	
	\begin{proposition}\label{prop7}
		Assume that $\eta$ and $\epsilon$ are positive constants such that $\eta>1$.	Let  $\phi_k (\mu)$ be defined by \eqref{eq20}. Then any solution $\mu>0$ of
		$\phi_k (\mu)$ satisfying
		\[
		\epsilon^2 \le \phi_k (\mu)\le \eta^2 \epsilon^2,
		\]
		determines a solution $y_{k,\mu_k}$ of \eqref{eq33e} such that
		\[
		\epsilon \le \left\| \bar{T}_k y_{k,\mu} -\beta_1 e_1 \right\|_2\le \eta \epsilon,
		\]	
		and $\mathscr{X}_{k,\mu_k}= {\tilde{\mathscr{V}}}_{k}\bar{\times}_{(N+1)} y_{k,\mu_k}$ satisfies
		\begin{equation}
		\epsilon \le \left\| \mathcal{M}(\mathscr{X}_{k,\mu_k})-\mathscr{F}
		\right\|\le \eta \epsilon. \label{eq202}
		\end{equation}
	\end{proposition}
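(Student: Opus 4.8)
The plan is to recognize Proposition~\ref{prop7} as a direct repackaging of the norm identities already established in the display chain culminating in \eqref{eq20}, combined with the monotonicity of the square root. First I would record that, for the Tikhonov minimizer $y_{k,\mu}=\beta_1(\bar{T}_k^T\bar{T}_k+\mu I)^{-1}\bar{T}_k^Te_1$ computed just above the proposition, the displayed calculation gives
\[
\left\| \bar{T}_k y_{k,\mu} - \beta_1 e_1 \right\|_2 = \beta_1 \sqrt{e_1^T (\mu^{-1}\bar{T}_k \bar{T}_k^T + I)^{-2} e_1} = \sqrt{\phi_k(\mu)},
\]
the last step being nothing more than the definition \eqref{eq20} of $\phi_k$. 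Thus $\phi_k(\mu)=\left\| \bar{T}_k y_{k,\mu} - \beta_1 e_1 \right\|_2^2$ for every $\mu>0$.

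Second, I would feed the hypothesis through this identity. Because $\epsilon>0$, $\eta>1$, and every quantity in sight is nonnegative, applying the (order-preserving) square root to $\epsilon^2 \le \phi_k(\mu) \le \eta^2\epsilon^2$ yields exactly
\[
\epsilon \le \left\| \bar{T}_k y_{k,\mu} - \beta_1 e_1 \right\|_2 \le \eta\epsilon,
\]
which is the first asserted bound; here $\sqrt{\eta^2\epsilon^2}=\eta\epsilon$ since both $\eta$ and $\epsilon$ are positive.

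Finally, to lift this reduced-space bound to the full tensor residual \eqref{eq202}, I would invoke the computation \eqref{eq201}. That derivation---assembled from \eqref{moh}, Lemma~\ref{lem3.1}, and Proposition~\ref{p2}---holds for an arbitrary coefficient vector and shows $\left\| \mathcal{M}(\mathscr{X}_{k,\mu_k}) - \mathscr{F} \right\| = \left\| \bar{T}_k y_{k,\mu_k} - \beta_1 e_1 \right\|_2$ whenever $\mathscr{X}_{k,\mu_k}=\tilde{\mathscr{V}}_k \bar{\times}_{(N+1)} y_{k,\mu_k}$. Substituting the two-sided bound just obtained then gives \eqref{eq202} at once.

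I do not expect a genuine obstacle here, since the proposition is a bookkeeping consequence of identities already in hand rather than a new estimate. The only points needing a moment of care are: confirming that \eqref{eq201} applies to the specific regularized vector $y_{k,\mu_k}$ and not merely to an unconstrained least-squares solution (it does, because that derivation never used the particular form of the vector); and reading the mildly garbled phrase ``solution $\mu>0$ of $\phi_k(\mu)$'' in the statement as ``value $\mu>0$ at which $\phi_k$ lies in $[\epsilon^2,\eta^2\epsilon^2]$''.
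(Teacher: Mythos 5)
Your proposal is correct and follows essentially the same route as the paper: the paper's own proof is the one-line remark that \eqref{eq202} follows immediately from \eqref{eq201}, which is exactly the chain you spell out (the residual norm of the reduced problem equals $\sqrt{\phi_k(\mu)}$ by the displayed computation defining \eqref{eq20}, square roots preserve the two-sided bound, and \eqref{eq201} transfers the bound to the full tensor residual). Your version merely makes explicit the bookkeeping that the paper leaves implicit.
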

	\begin{proof}
		Eq. \eqref{eq202} follows from \eqref{eq201} immediately.
	\end{proof}

	\begin{proposition}\label{prop8}
	Let  $\phi_k (\mu)$ be defined by \eqref{eq20}. Then the function
		$\mu \to \phi_k (1/\mu)$ is strictly decreasing and convex for $\mu > 0$. Moreover,
		\[\mathop {\lim }\limits_{\mu  \to \infty } {\phi _k}(1/\mu ) = \beta _1^2.\]
		In particular, Newton’s method applied to the solution of the equation
		${\phi _k}(1/\mu )=\eta^2 \epsilon^2$ with initial approximate solution $\mu_0$ to  the left of solution converges monotonically and quadratically.
	\end{proposition}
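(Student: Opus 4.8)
The plan is to reduce the whole statement to an elementary analysis of a scalar rational function of $\mu$ obtained by diagonalizing the relevant matrix. Let $B:=\bar{T}_k\bar{T}_k^T\in\mathbb{R}^{(k+1)\times(k+1)}$, which is symmetric positive semidefinite, and write its spectral decomposition $B=Q\Lambda Q^T$ with $Q=[q_1,\dots,q_{k+1}]$ orthogonal, $\Lambda=\diag(\lambda_1,\dots,\lambda_{k+1})$ and $\lambda_i\ge 0$. Setting $c_i:=(e_1^Tq_i)^2\ge 0$ and using $(I+\mu B)^{-2}=Q(I+\mu\Lambda)^{-2}Q^T$, I would first record the closed form
\[
\phi_k(1/\mu)=\beta_1^2\,e_1^T(I+\mu B)^{-2}e_1=\beta_1^2\sum_{i=1}^{k+1}\frac{c_i}{(1+\mu\lambda_i)^2},\qquad \mu>0.
\]
Since $Q$ is orthogonal, $\sum_{i=1}^{k+1}c_i=\|e_1\|_2^2=1$, and this identity is exactly what produces the boundary value $\beta_1^2$.

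From this representation the first two assertions follow by term-by-term differentiation, namely
\[
\frac{d}{d\mu}\phi_k(1/\mu)=-2\beta_1^2\sum_{i=1}^{k+1}\frac{c_i\lambda_i}{(1+\mu\lambda_i)^3},\qquad \frac{d^2}{d\mu^2}\phi_k(1/\mu)=6\beta_1^2\sum_{i=1}^{k+1}\frac{c_i\lambda_i^2}{(1+\mu\lambda_i)^4}.
\]
The second display is a sum of nonnegative terms, so $\mu\mapsto\phi_k(1/\mu)$ is convex; it is strictly decreasing once one checks that the first sum is not identically zero, i.e. that $Be_1\neq 0$. I would verify the latter from the bidiagonal structure of $\bar{T}_k$: its first row is $(\alpha_1,0,\dots,0)$ with $\alpha_1\neq 0$ under the no-breakdown assumption, so $\bar{T}_k^Te_1$ is $\alpha_1$ times the first canonical basis vector of $\mathbb{R}^{k}$, whence $Be_1=\bar{T}_k(\bar{T}_k^Te_1)$ equals $\alpha_1$ times the nonzero first column of $\bar{T}_k$. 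Consequently at least one index carries $c_i>0$ and $\lambda_i>0$, which gives strict monotonicity and in fact strict convexity. For the limiting value I would pass to the limit inside the finite sum at the endpoint where each factor $(1+\mu\lambda_i)^{-2}$ tends to $1$, so that $\sum_i c_i=1$ collapses the sum to $\beta_1^2$.

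Finally, for the Newton claim I would use that $\psi(\mu):=\phi_k(1/\mu)$ is $C^\infty$, strictly decreasing and convex, so that $\psi(\mu)=\eta^2\epsilon^2$ has a unique solution $\mu^*$ whenever $\eta^2\epsilon^2$ lies below the boundary value $\beta_1^2$. Writing the iteration $\mu_{n+1}=\mu_n-(\psi(\mu_n)-\eta^2\epsilon^2)/\psi'(\mu_n)$, convexity yields the global tangent inequality $\psi(\mu)\ge\psi(\mu_n)+\psi'(\mu_n)(\mu-\mu_n)$; evaluating at $\mu=\mu^*$ and dividing by $\psi'(\mu_n)<0$ gives $\mu_{n+1}\le\mu^*$, while $\mu_n<\mu^*$ together with strict monotonicity forces $\psi(\mu_n)>\eta^2\epsilon^2$ and hence $\mu_{n+1}>\mu_n$. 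By induction the iterates increase and stay to the left of $\mu^*$, so they converge to a fixed point of the Newton map, which must be $\mu^*$; the quadratic rate then follows from the standard local convergence theorem, since $\psi\in C^2$ and $\psi'(\mu^*)\neq 0$. I expect the only delicate point to be this one-sided, monotone control of the iterates, that is, showing simultaneously that each step moves toward $\mu^*$ yet never overshoots it; this is precisely where \emph{global} convexity (rather than mere local behavior) is essential, whereas the differentiation and the limit are routine.
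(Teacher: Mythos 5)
The paper offers no proof of this proposition at all: it disposes of it with a citation to Proposition 3.6 of Buccini's thesis, which is itself the classical Calvetti--Reichel argument for functions of the form $\mu\mapsto b^T(\mu AA^T+I)^{-2}b$. Your spectral-decomposition proof is, in substance, exactly that outsourced argument reconstructed in full: diagonalize $B=\bar{T}_k\bar{T}_k^T$, reduce to the scalar sum $\beta_1^2\sum_i c_i(1+\mu\lambda_i)^{-2}$ with $\sum_i c_i=1$, differentiate termwise, and run the one-sided tangent-line induction for Newton's method. So you have not found a genuinely different route; you have filled in the route the paper merely points to. What your version adds over the bare citation is nevertheless valuable: the explicit check that $Be_1\neq 0$ (from $\alpha_1\neq 0$ and the lower-bidiagonal structure of $\bar{T}_k$), which is what makes the monotonicity strict and guarantees that your $\psi$ satisfies $\psi'(\mu^*)\neq 0$, hence the quadratic rate; and the clean argument that the Newton iterates increase yet never overshoot $\mu^*$, which is where global convexity enters.

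There is, however, one point you passed over that you should not have: the limit. With the parametrization you adopt (the one consistent with \eqref{eq20} and with ``strictly decreasing''), namely $\phi_k(1/\mu)=\beta_1^2e_1^T(I+\mu B)^{-2}e_1$, the factors $(1+\mu\lambda_i)^{-2}$ tend to $1$ as $\mu\to 0^+$, \emph{not} as $\mu\to\infty$; your own closed form gives
\[
\lim_{\mu\to\infty}\phi_k(1/\mu)=\beta_1^2\sum_{i:\,\lambda_i=0}c_i=\beta_1^2\left\|P_{\ker B}\,e_1\right\|_2^2,
\]
where $P_{\ker B}$ is the orthogonal projector onto $\ker B$, and by your observation that $Be_1\neq 0$ this value is strictly smaller than $\beta_1^2$. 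Indeed, a strictly decreasing function on $(0,\infty)$ cannot tend to its supremum at $+\infty$, so the two claims in the proposition are mutually inconsistent as literally written. The displayed limit is correct as $\mu\to 0^+$, or equivalently as $\lim_{\mu\to\infty}\phi_k(\mu)$ in the parametrization of \eqref{eq20}; the statement conflates the $\mu$- and $\nu=1/\mu$-parametrizations, an inconsistency inherited from the cited source. Your proof silently evaluates the $\mu\to 0^+$ endpoint, so as written it does not establish the literal claim --- you should flag the typo and prove the corrected limit, which is what your computation actually does. A further minor omission: for the Newton equation to have a solution at all, $\eta^2\epsilon^2$ must lie not only below $\beta_1^2$ but also above the limiting value $\beta_1^2\left\|P_{\ker B}e_1\right\|_2^2$.
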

	\begin{proof}
		See \cite[Proposition 3.6]{Buccini}.
		\end{proof}
	
\noindent	For simplicity, we set $\nu =\mu^{-1}.$ Consider the integral 	
	\[I(f):=\int {f(t)d\omega (t)}, \]
	for suitable function $f$ and assume that $\mathcal{G}_k$ and $\mathcal{R}_{k+1}$ are respectively the $k$-point Gauss quadrature  and $(k+1)$-point Gauss-Radau rule. In \cite{Bentbib}, it has been discussed that using spectral factorization of $\bar{T}_k\bar{T}_k^T$, the function $\phi_k(\nu)= \beta_1^2 e_1^T (\nu \bar{T}_k \bar{T}_k ^T+I)^{-2} e_1$ can be expressed by
	\[\phi_k(\nu):=\int {f_\nu(t)d\omega (t)},\]
	with $f_\nu (t):=(\nu t +1)^{-2}$.
	
\noindent	Analogous to \cite{Bentbib}, we can deduce that
	\[
	\mathcal{G}_kf_\nu =\beta_1^2 e_1^T(\nu {T}_k{T}_k^T+I_k)^{-2}e_1^T \quad \text {and} \quad
	\mathcal{R}_{k+1}f_\nu =\beta_1^2 e_1^T(\nu \bar{T}_k\bar{T}_k^T+I_{k+1})^{-2}e_1^T.
	\]
	It is known from \cite{Bentbib2016} that
	\[
	\mathcal{G}_1 f_\nu < \cdots < \mathcal{G}_{k-1} f_\nu  < \mathcal{G}_k f_\nu  < \phi_k(\nu)
	\]
	and
	\[
	\phi_k(\nu) < \mathcal{R}_{k+1} f_\nu < \mathcal{R}_{k} f_\nu < \cdots < \mathcal{R}_1 f_\nu.
	\]
	In fact the above two relations show that the $\mathcal{G}_kf_\nu $ and $\mathcal{R}_{k+1} f_\nu$ provide lower and upper bounds for $\phi_k(\nu)$ (or $\phi_k(\mu)$ with $\mu=1/\nu$). The bounds are helpful for determining $\mu$ by the discrepancy principle in an inexpensive way. To this end, at step $k\ge 2$,  we find $\nu > 0$ by solving the following nonlinear equation,
	\begin{equation}\label{eq203}
	\mathcal{G}_k f_\nu = \epsilon^2.
	\end{equation}
	We comment that in view of Proposition \ref{prop8}, one may use Newton's method efficiently to solve \eqref{eq203}. If for the solution $\nu$, we have
	\begin{equation}\label{eq204}
	\mathcal{R}_{k+1} f_\nu \le \eta^2\epsilon^2.
	\end{equation}
	Then Proposition \ref{prop7} illustrates that
	$\mathscr{X}_{k,1/\nu}= {\tilde{\mathscr{V}}}_{k}\bar{\times}_{(N+1)} y_{k,\mu_k}$ satisfies
	\begin{equation*}
	\epsilon \le \left\| \mathcal{M}(\mathscr{X}_{k,1/\nu})-\mathscr{F}
	\right\|\le \eta \epsilon
	\end{equation*}
	If \eqref{eq204} does not holds, then we need to apply one more step of Algorithm \ref{a2} replacing $k$ with $k+1$.  As pointed out in \cite{Bentbib}, the bound \eqref{eq204} can be satisfied for small values of $k$.
	
\noindent	Assume that the bound \eqref{eq204} hold then we need to find the vector $y_{k,\mu_k}$
	by solving \eqref{eq33e} in which we set $\mu_k=1/\nu$ where $\nu$ satisfies \eqref{eq203}
	and \eqref{eq204}. Finally, we can determine the approximate solution
	$\mathscr{X}_{k,\mu_k}$
	by
	\[
	\mathscr{X}_{k,\mu_k}= {\tilde{\mathscr{V}}}_{k}\bar{\times}_{(N+1)} y_{k,\mu_k}.
	\]

\noindent Based on above discussions, we can construct two approaches based on GKB process to solve  \eqref{eq33}. These strategies are summarized in Algorithms \ref{a3} and \ref{a4}. In the next section, we numerically examine the feasibility of these algorithms. It turns out that each step of Algorithm \ref{a3} requires less CPU-time than Algorithm \ref{a4} to be performed.

\RestyleAlgo{ruled}
\LinesNumbered
\begin{algorithm}[htbp]
	\caption{GKB$_{-}$BTF-Tikhonov method with quadrature rules.\label{a3}}
	\textbf{Input:} Linear operator $\mathcal{M}$ ($\tilde{\mathcal{M}}$) and the right-hand side $\mathscr{F}$  ($\mathscr{D}$).\\
	Set user-chosen constant $\eta= 1.01$ and  $\varepsilon=\|\mathscr{E}\|$ where $ \mathscr{D}=\hat{\mathscr{D}}+\mathscr{E} $ in which $ \hat{\mathscr{D}} $ denote error-free tensor;\\
	Set $k=2$;\\
	Set $\mathscr{V}_1=\mathscr{F}/\|\mathscr{F}\|$;\\
	Determine the orthonormal bases $\{\mathscr{U}_j\}_{j=1}^{k+1}$ and $\{\mathscr{V}_j\}_{j=1}^k$ of tensors, and
	the bidiagonal matrices $T_k$ and $\bar{T}_k$ with Algorithm \ref{a2};\label{l4}\\
	Determine $\mu_k$ that satisfies \eqref{eq203} and \eqref{eq204} with Newton's method. This may require increasing  $k$, in this case set $k=k+1$ and go to Line \ref{l4}.\\
	Determine $y_{k,\mu_k}$ by solving \eqref{eq33e} and compute $\mathscr{X}_{k,\mu_k}= {\tilde{\mathscr{V}}}_{k}\bar{\times}_{(N+1)} y_{k,\mu_k}$;
\end{algorithm}

\RestyleAlgo{ruled}
\LinesNumbered
\begin{algorithm}[htbp]
	\caption{GK$_{-}$BTF-Tikhonov method (GKT$_{-}$BTF).\label{a4}}
	\textbf{Input:} Linear operator $\mathcal{M}$ ($\tilde{\mathcal{M}}$) and the right-hand side $\mathscr{F}$  ($\mathscr{D}$), $\varepsilon$, $\eta\geq 1$.\\
	Set $\mathscr{V}_1=\mathscr{F}/\|\mathscr{F}\|$;\\
	\For{k=1,2,\dots}{
		Determine the orthonormal bases $\{\mathscr{U}_j\}_{j=1}^{k+1}$ and $\{\mathscr{V}_j\}_{j=1}^k$ of tensors, and
		the bidiagonal matrices $T_k$ and $\bar{T}_k$ with Algorithm \ref{a2};\\
		Determine regularization parameter $\mu_k$ with by the discrepancy principle with user-chosen constant
		$\eta= 1.01$ \cite{Huang};\\
		Determine $y_{k,\mu_k}$  by solving a least-squares problem
		\[		\mathop {\min }\limits_{y \in {R^k}} {\left\| {\left( {\begin{array}{*{20}{c}}
					{{{\bar{ T }}_k}}\\
					{\sqrt \mu  I}
					\end{array}} \right)y - \left( {\begin{array}{*{20}{c}}
					{{\beta _1}}\\
					0
					\end{array}} \right)} \right\|_2}\]
		where $\beta_1=\|\mathscr{D}\| (\beta_1=\|\mathscr{F}\|)$\;
		\If{$\left\| \bar{T}_k y_{k,\mu_k} -\beta_1 e_1
			\right\|_2\leq \eta\varepsilon$,}
		{
		break;
	}
	}
	Compute $\mathscr{X}_{k,\mu_k}= {\tilde{\mathscr{V}}}_{k}\bar{\times}_{(N+1)} y_{k,\mu_k}$;\\
\end{algorithm}

\newpage
\section{Numerical experiments}\label{Sec4}
In this section, we report some numerical experiments to compare performances of the proposed methods. We limit ourselves to the case  $N=3$ in  \eqref{eq1} and \eqref{eql1}. In all  the test problems, the right-hand side tensors are assumed to be contaminated by an error tensor $\mathscr{E}$ which has normally distributed random entries with zero mean being scaled to have a specific level of noise
$\nu :={{\|\mathscr{E}\|}}/{{\|\mathscr{D}\|}}$ ($\nu :={{\|\mathscr{E}\|}}/{{\|\mathscr{F}\|}}$). All computations were carried out using Tensor Toolbox \cite{Bader} in \textsc{Matlab} R2018b with an Intel Core i7-4770K CPU @ 3.50GHz processor and 24GB RAM.

\noindent The relative error that we computed is given by
\[e_k:= \frac{{\Vert {{\mathscr{X}_{{\lambda _k},k}} - \hat{\mathscr{X}}} \Vert}}{{\Vert \hat{\mathscr{X}} \Vert}},\]
where $\hat{\mathscr{X}}$ denotes the desired solution of the error-free problem and $\mathscr{X}_{{\lambda _k},k}$ is the $k$-th computed approximation by the proposed algorithms.

\noindent In Tables \ref{tab5}, \ref{tab8}, \ref{tab2} and \ref{tab4}, the iterations were stopped when
\begin{equation}\label{eq71}
\left\| \mathcal{M}(\mathscr{X})-\mathscr{F}
\right\|\leq \eta \varepsilon,
\end{equation}
where $\eta$ is user-chosen constant and $\varepsilon$ is the norm of error, i.e.,  $\varepsilon=\|\mathscr{E}\|$. We comment that the norm in left-hand side of the above relation is computed inexpensively in view of  \eqref{eq201}.

\noindent For comparison with existing approaches in the literature, we use global Hessenberg process in conjunction with Tikhonov regularization based on tensor format (HT$_{-}$BTF) and flexible HT$_{-}$BTF (FHT$_{-}$BTF)  proposed in \cite{Najafi} for which we determine the regularization parameter by  {\it discrepancy principle} described in \cite{Hansen1}.
When the coefficient matrices are full, as anticipated, FHT$_{-}$BTF outperforms
other examined algorithms. However, for large and sparse coefficient matrices,
 FHT$_{-}$BTF needs more CPU time than Algorithms \ref{a3} and \ref{a4}.
Our observations illustrate that FHT$_{-}$BTF take a long time with respect to the stopping criterion \eqref{eq71} for large problems. Therefore, for the results reported in Tables \ref{tab8-1}, \ref{tab2-1} and \ref{tab4-1}, we used an alternative stopping criterion
given by,
\begin{equation}\label{eq72}
	\frac{{\| {{\mathscr{X}_{{ \lambda _k},k}} - {\mathscr{X}_{{\lambda _{k - 1}},k - 1}}} \|}}{{\| {{\mathscr{X}_{{\lambda _{k - 1}},k - 1}}} \|}} \le \tau,
	\end{equation}
where the maximum number of $40$ iterations was allowed. In FHT$_{-}$BTF method, we used two steps of stabilized biconjugate gradients based on tensor format (BiCGSTAB$_{-}$BTF) \cite{Chen} as the inner iteration; see \cite{Najafi} for further details.

\noindent We reported  the  required number of iterations and consumed CPU-time (in seconds) by algorithms to compute suitable approximate solutions satisfying the stopping criteria. For more clarification, we divide this section into two main parts. In Subsections \ref{Su2} and \ref{Su1}, we provide some numerical examples to solve ill-posed problems in the forms \eqref{eq1} and \eqref{eql1}, respectively.

\noindent To test the performance of algorithms for image restoration,
the exact solutions are tensors of sizes $ 576\times  787 \times 3 $\footnote{The corresponding color image is available at \url{https://www.hlevkin.com/TestImages/Boats.ppm}}  and $1019\times 1337\times 33$ which the second one associated with a hyperspectral image of natural scenes being also used in \cite[Example 5.3]{Najafi}.  Blurring matrices have the following forms in Subsections \ref{Su2} and \ref{Su1}, respectively,
\[
I \otimes I \times A^{(1)} + I \otimes A^{(2)} \otimes I + A^{(3)} \otimes I \otimes I,
\]
and
 \[ {I - {A^{(3)}} \otimes {A^{(2)}} \otimes {A^{(1)}}}, \]
 where $A^{(i)}$s are either Gaussian Toeplitz matrix $A=[a_{ij}]$ given by,
 \begin{align}\label{mat2}
 a_{ij} =&\begin{cases}
 \dfrac{1}{\sigma \sqrt{2\pi}}\exp\left(  -\dfrac{(i-j)^2}{2\sigma^2}\right) ,&|i-j|\leq r\\
 0,&\text{otherwise}
 \end{cases},
 \end{align}
 or the uniform Toeplitz matrix $B=[b_{ij}]$ defined by
 \begin{align}\label{mat1}
 b_{ij} =&\begin{cases}
 \dfrac{1}{2r-1},&|i-j|\leq r\\
 0,&\text{otherwise}
 \end{cases}.
 \end{align}
In literature, \eqref{mat2} and \eqref{mat1} have been used
as blurring matrices for testing applications of iterative schemes for image deblurring;  see \cite{Bentbib2016,Bentbib,Bouhamidi,Huang} for instance.
\subsection{Experimental results for ill-posed Sylvester tensor equations}\label{Su2}
As a first test problem, we consider \eqref{eq1} in which the coefficient matrices are full and extremely ill-conditioned. This kind of equations may arise from discretization of
a fully three-dimensional microscale dual phase lag problem by a mixed-collocation finite difference method; see \cite{Malek1,Malek2,Momeni-Masuleh} for further details.


\begin{example}\label{ex5}	Consider \eqref{eq1} with a perturbed right-hand side  such that  $A^{(\ell)}=[a_{ij}]$ for $\ell=1,2,3$ are defined by
		\[
		a_{ij}=
		\begin{cases}
		-2\left(\frac{\pi}{L} \right)^2\frac{(-1)^{i+j}}{\sin^2\left[\frac{1}{2}
			\left( \frac{2\pi \xi_j}{L}-x_i\right) \right]}, & i\ne j\\
		-\left(\frac{\pi}{L} \right)^2\left(\frac{n^2+2}{3} \right),& i=j,
		\end{cases}
		\]
		where  $ x_i=\frac{2\pi (i-1)}{n}, \,\xi_j=\frac{(j-1)L}{n},\, i,j=1,2,\dots,n $ with $L=300$.	The same problem  was solved by global schemes choosing odd values of $n$  for which the coefficient matrices $A^{(i)}$ are very well-conditioned; see \cite{Beik}. Similar to \cite[Example 5.4]{Najafi}, the value of $n$ is chosen to be even which results extremely ill-conditioned coefficient matrices. The error free right-hand side of \eqref{eq1} is constructed so that $\mathscr{X^\ast}=\mathrm{randn}(n,n,n)$ is its exact solution.
		The obtained numerical results  are disclosed in Table  \ref{tab5}.

	\begin{table}[htbp]
		\centering
		\caption{Comparison results for  Example \ref{ex5} with respect to stopping criterion \eqref{eq71}.}\resizebox{\textwidth}{!}{
		\begin{tabular}{ccclccc}
			\hline
			Grid  &$\mathrm{cond}(A^{(i)})$& Level of noise $(\nu)$ & Method &  Iter($ k $) & $e_k $   & CPU-times(sec) \\
			\hline
			\multirow{7}[7]{*}{$ 100\times100\times100$} & \multirow{7}[7]{*}{$1.25\cdot10^{16}$}&\multirow{3}[4]{*}{0.01} & Algorithm \ref{a3}     & 39&$1.11\cdot 10^{-1}$& 2.31\\
			&       &&  Algorithm \ref{a4}   &66  &$7.54\cdot10^{-2}$&3.34\\
			&    &   & HT$_{-}$BTF   &  11  &$7.51\cdot10^{-2}$&3.62\\
			&    &   & FHT$_{-}$BTF   &  5   &$6.25\cdot10^{-2}$& 0.98\bigstrut[b]\\
			\cline{3-7}      &    & \multirow{3}[4]{*}{0.001} &  Algorithm \ref{a3}   & 134    & $4.48\cdot10^{-2}$&7.53 \\
			&    &   &  Algorithm \ref{a4} & 126   &$4.49\cdot10^{-2}$&6.49 \\
			&    &   & HT$_{-}$BTF   &  24 &$2.57\cdot10^{-2}$& 34.84 \\
			&    &   & FHT$_{-}$BTF   &    8 &$2.33\cdot10^{-2}$&2.35 \bigstrut[b]\\
			\hline
			\multirow{7}[7]{*}{$ 150\times 150\times 150$} &\multirow{7}[7]{*}{$4.67\cdot10^{16}$}& \multirow{3}[4]{*}{0.01} &  Algorithm \ref{a3}  & 37    & $1.18\cdot10^{-1}$ &7.03 \\
			&     &  &Algorithm \ref{a4}     & 103  &$5.88\cdot10^{-2}$&17.28\\
			&    &   & HT$_{-}$BTF   & 11   & $7.40\cdot10^{-2}$&12.32  \\
			&    &   & FHT$_{-}$BTF   &   5  &$6.33\cdot10^{-2}$&3.34 \\
			\cline{3-7}     &     & \multirow{3}[4]{*}{0.001} & Algorithm \ref{a3}   & 178    & $4.02\cdot10^{-2}$ &36.89\bigstrut[t]\\
			&   &    & Algorithm \ref{a4}    & 193    &$3.30\cdot10^{-2}$ &36.97\\
			&    &   & HT$_{-}$BTF   &   21 & $3.21\cdot10^{-2}$&72.95 \\
			&    &   & FHT$_{-}$BTF   &  8   &$2.61\cdot10^{-2}$& 8.15\bigstrut[b]\\
			\hline
			\multirow{7}[7]{*}{$ 180\times180\times180$}& \multirow{7}[7]{*}{$3.28\cdot10^{16}$}& \multirow{3}[4]{*}{0.01} & Algorithm \ref{a3}& 36    & $1.19\cdot10^{-1}$&11.13\bigstrut[t]\\
			&&       &     Algorithm \ref{a4} &   127   & $5.38\cdot10^{-2}$&35.85\\
			&    &   & HT$_{-}$BTF   & 11   & $7.55\cdot10^{-2}$&21.45  \\
			&    &   & FHT$_{-}$BTF   & 5    &$6.13\cdot10^{-2}$&5.66 \bigstrut[b]\\
			\cline{3-7}    &      & \multirow{3}[4]{*}{0.001} & Algorithm \ref{a3}     & 154    & $4.18\cdot10^{-2}$& 58.64\bigstrut[t]\\
			&   &    &   Algorithm \ref{a4} &  231    &$2.88\cdot10^{-2}$ & 73.47\\
			&    &   & HT$_{-}$BTF   &   22 & $2.89\cdot10^{-2}$&  134.51\\
			&    &   & FHT$_{-}$BTF   &  7   &$2.91\cdot10^{-2}$& 11.65\\
			\hline
		\end{tabular}%
	}
		\label{tab5}%
	\end{table}%

	
\end{example}		
As can be  seen in Table \ref{tab5}, FHT$_{-}$BTF works better than the other approaches, this could be expected as the coefficient matrices are full. Now we present experimental results related to image restoration. In fact, error free right-hand sides in \eqref{eq1} is constructed such that the exact solution is a hyperspectral image. Here the matrices $A^{(i)}$s ($i=1,2,3$) are sparse and it is observed that Algorithm \ref{a3} surpasses other examined iterative schemes. \\

\begin{example}\label{ex8}
	We consider the case that a tensor of order $ 1019\times 1337\times33 $ is the exact solution of \eqref{eq1} which corresponds to a hyperspectral image of natural scenes\footnote{\url{http://personalpages.manchester.ac.uk/staff/d.h.foster}}.The coefficient matrices $ A^{(1)},A^{(2)} $ and $ A^{(3)} $ are given by \eqref{mat1} with suitable dimensions such that $ r = 2 $  for $ A^{(1)}$, $A^{(2)} $  and $ r = 3$  for $ A^{(3)}$  which result $ \mathrm{cond}(A^{(1)})=5.26\cdot10^{16}$, $\mathrm{cond}(A^{(2)})=  1.75\cdot10^{17}$ and $\mathrm{cond}(A^{(3)})=4.75\cdot10^{16}$.

The obtained numerical results are disclosed in Table  \ref{tab8} for which the algorithms was terminated once \eqref{eq71} satisfied.
As pointed out earlier, (F)HT$_{-}$BTF method can not be efficiently used with respect to stopping criterion \eqref{eq71}. Therefore, we rerun all of the algorithms
with respect to \eqref{eq72} and report the results in Table \ref{tab8-1}.
As seen, Algorithms \ref{a3} and \ref{a4} work better than  (F)HT$_{-}$BTF.
We further comment that Algorithm \ref{a3} consumes less CPU-time than Algorithm \ref{a4}.
	\begin{table}[htbp]
		\centering
		\caption{Results for  Example \ref{ex8} with respect to stopping criterion \eqref{eq71}.}
		\begin{tabular}{clccc}
			\hline
			Level of noise $(\nu)$  & \multicolumn{1}{c}{Method} &  Iter($ k $)     &$e_k $   & CPU-times(sec) \\
			\hline
			\multirow{2}[2]{*}{0.01}
			&  Algorithm \ref{a3}    & 6  & $3.54\cdot10^{-2}$ &14.31 \\
			& Algorithm \ref{a4}      & 6&  $3.54\cdot10^{-2}$&19.57 \\
			\hline
			\multirow{2}[2]{*}{0.001}
			& Algorithm \ref{a3}     &20    & $1.72\cdot10^{-2}$& 57.03\\
			& Algorithm \ref{a4}    &     20 & $1.72\cdot10^{-2}$ & 65.44	\\
			\hline
		\end{tabular}%
		\label{tab8}%
	\end{table}%
	\begin{table}[htbp]
		\centering
		\caption{Results for  Example \ref{ex8}  with respect to stopping criterion \eqref{eq72} using $ \tau=2\cdot10^{-2}$.}
		\begin{tabular}{clccc}
			\hline
			Level of noise $(\nu)$  & \multicolumn{1}{c}{Method} &  Iter($ k $)     &$e_k $   & CPU-times(sec) \bigstrut\\
			\hline
			\multirow{3}[3]{*}{0.01}
			&  Algorithm \ref{a3}    &    4   &$ 3.85\cdot10^{-2}$&7.45 \bigstrut[t]\\
			& Algorithm \ref{a4}      &  4 &  $3.88 \cdot10^{-2}$&  14.39 \\
			& HT$_{-}$BTF     &4 &  $6.77\cdot10^{-2}$&   27.01\\
			& FHT$_{-}$BTF     & 2 &  $6.28\cdot10^{-2}$&  26.81	\\
			\hline
			\multirow{3}[3]{*}{0.001}
			& Algorithm \ref{a3}     &    4    &$ 3.57\cdot10^{-2}$&7.47\\
			& Algorithm \ref{a4}     &      4  &  $3.65 \cdot10^{-2}$& 14.49\\
			& HT$_{-}$BTF     &     4 &  $4.40 \cdot10^{-2}$& 26.67\\
			& FHT$_{-}$BTF     &2  &  $3.85\cdot10^{-2}$&   25.30	\\
			\hline
		\end{tabular}%
		\label{tab8-1}%
	\end{table}%
	
\end{example}

\subsection{Experimental results for ill-posed Stein tensor equations}\label{Su1}

In  this subsection, we apply the proposed approaches for solving two ill-posed problems in the form \eqref{eql1}. Here, error free right-hand sides are constructed such that exact solutions of \eqref{eql1} are color images.
The iterations in the algorithms were stopped in two different ways, i.e., \eqref{eq71} and  \eqref{eq72} are used separately.

\begin{example}\label{ex2}
	This example is concerned with the restoration of a color image. The “original” exact image\footnote{The image is available at \url{https://www.hlevkin.com/TestImages/Boats.ppm}}  is stored by a $ 576\times  787 \times 3 $ tensor.
	We consider \eqref{eql1} in which  $A^{(1)}$ is given by \eqref{mat2}, $A^{(2)}$ and $ A^{(3)}$ are given by \eqref{mat1} with suitable dimensions. Here we set  $r=7,\sigma=2$ for $A^{(1)}$ and $r=2$ for $A^{(2)}$ and $A^{(3)}$. It can be seen that
	$ \mathrm{cond}(A^{(1)})=1.79\cdot10^{6}$ and $
	\mathrm{cond}(A^{(2)})=   4.05\cdot10^{17}$ and $
	\mathrm{cond}(A^{(3)})=   6.45\cdot10^{49}$.
	\begin{table}[htbp]
		\centering
		\caption{Results for  Example \ref{ex2} with respect to stopping criterion \eqref{eq71}.}
			\begin{tabular}{clccc}
				\hline
				Level of noise $(\nu)$  & \multicolumn{1}{c}{Method} &  Iter($ k $)     &$e_k $   & CPU-times(sec) \bigstrut\\
				\hline
				\multirow{4}[4]{*}{0.01}
				&  Algorithm \ref{a3}    & 13    & $5.31\cdot10^{-2}$ &0.08\bigstrut[t]\\
				& Algorithm \ref{a4}      &  13    &  $5.32\cdot10^{-2}$&   1.03\\
				& HT$_{-}$BTF     &   12    & $6.46\cdot10^{-2}$& 8.71\\
				& FHT$_{-}$BTF     &   5  & $6.58\cdot10^{-2}$&2.25	\bigstrut[b]\\
				
				\hline
				\multirow{4}[4]{*}{0.001}
				& Algorithm \ref{a3}     & 52     &  $2.63\cdot10^{-2}$&  3.78\bigstrut[t]\\
				& Algorithm \ref{a4}     &     63  &  $2.46\cdot10^{-2}$& 5.62 \\
				& HT$_{-}$BTF     &     13   & $5.97\cdot10^{-2}$&10.64\bigstrut[b]\\
				& FHT$_{-}$BTF     &  6   & $6.41\cdot10^{-2}$&3.02\bigstrut[b]\\
				\hline
			\end{tabular}%
		\label{tab2}%
	\end{table}%
	
	\begin{table}[htbp]
		\centering
		\caption{Results for  Example \ref{ex2} with respect to stopping criterion \eqref{eq72} using $ \tau=2\cdot10^{-2} $.}
		\begin{tabular}{clccc}
			\hline
			Level of noise $(\nu)$  & \multicolumn{1}{c}{Method} &  Iter($ k $)     &$e_k $   & CPU-times(sec) \bigstrut\\
			\hline
			\multirow{4}[4]{*}{0.01}
			&  Algorithm \ref{a3}    & 6      & $7.40\cdot10^{-2}$ &0.41\bigstrut[t]\\
			& Algorithm \ref{a4}     &   6     &  $7.37\cdot10^{-2}$&   0.49\\
			& HT$_{-}$BTF     &     9   & $7.88\cdot10^{-2}$&4.09\\
			& FHT$_{-}$BTF     &   6    & $6.54\cdot10^{-2}$&2.86	\bigstrut[b]\\
			
			\hline
			\multirow{4}[4]{*}{0.001}
			& Algorithm \ref{a3}     &     6  &  $7.32\cdot10^{-2}$&0.38\bigstrut[t]\\
			& Algorithm \ref{a4}     &     6     &  $7.34\cdot10^{-2}$&0.50\\
			& HT$_{-}$BTF     &     9    & $7.84\cdot10^{-2}$&4.14\bigstrut[b]\\
			& FHT$_{-}$BTF     &   6    & $6.42\cdot10^{-2}$&2.87\bigstrut[b]\\
			\hline
		\end{tabular}%
		\label{tab2-1}%
	\end{table}%

\end{example}

\medskip
\begin{example}\label{ex4}
	We consider the case that a tensor of order $ 1019\times 1337\times33 $ is the exact solution of \eqref{eql1}. The coefficient matrices $ A^{(1)},A^{(2)} $ and $ A^{(3)} $ are  defined by \eqref{mat1} with suitable dimensions such that $ r = 12$  for $ A^{(1)}$ and $ r = 2$  for $A^{(2)} $  and $ r = 6$  for $ A^{(3)}$. Here, we have $ \mathrm{cond}(A^{(1)})=2.05\cdot10^{18}$, $\mathrm{cond}(A^{(2)})=  1.75\cdot10^{17}$ and $\mathrm{cond}(A^{(3)})=2.44\cdot10^{17}$.
\begin{table}[htbp]
	\centering
	\caption{Results for  Example \ref{ex4} with respect to stopping criterion \eqref{eq71}.}
	\begin{tabular}{clccc}
		\hline
		Level of noise $(\nu)$  & \multicolumn{1}{c}{Method} &  Iter($ k $)     &$e_k $   & CPU-times(sec) \bigstrut\\
		\hline
		\multirow{2}[2]{*}{0.01}
		&  Algorithm \ref{a3}    & 18    &$7.98\cdot10^{-2}$&58.76\bigstrut[t]\\
		& Algorithm \ref{a4}    & 18   &  $7.97\cdot10^{-2}$& 70.09 	\bigstrut[b]\\
		\hline
		\multirow{2}[2]{*}{0.001}
		& Algorithm \ref{a3}     & 31     &$5.62\cdot10^{-2}$&96.14\bigstrut[t]\\
		& Algorithm \ref{a4}     &     35  &  $5.22\cdot10^{-2}$&695.76\bigstrut[b]\\
		\hline
	\end{tabular}%
	\label{tab4}%
\end{table}%
\begin{table}[htbp]
	\centering
	\caption{Results for  Example \ref{ex4} with respect to stopping criterion \eqref{eq72} using $ \tau=3\cdot10^{-2} $.}
	\begin{tabular}{clccc}
		\hline
		Level of noise $(\nu)$  & \multicolumn{1}{c}{Method} &  Iter($ k $)     &$e_k $   & CPU-times(sec) \bigstrut\\
		\hline
		\multirow{3}[3]{*}{0.01}
		&  Algorithm \ref{a3}    &   6   &$ 1.39\cdot10^{-1}$&17.73 \bigstrut[t]\\
		& Algorithm \ref{a4}      &    6 &  $1.38 \cdot10^{-1}$&  26.31 \\
		& HT$_{-}$BTF     &  6   &  $2.61\cdot10^{-1}$& 54.83\\
		& FHT$_{-}$BTF     &    4 &  $1.40\cdot10^{-1}$&   57.04	\bigstrut[b]\\
		\hline
		\multirow{3}[3]{*}{0.001}
		& Algorithm \ref{a3}     &    6   &$ 1.37\cdot10^{-1}$&17.74\bigstrut[t]\\
		& Algorithm \ref{a4}    &     6   &  $1.38 \cdot10^{-1}$& 26.52\\
		& HT$_{-}$BTF     &  6    &  $2.44\cdot10^{-1}$& 54.94   \\
		& FHT$_{-}$BTF     &   4   &  $1.38\cdot10^{-1}$&   57.96	\bigstrut[b]\\
		\hline
	\end{tabular}%
	\label{tab4-1}%
\end{table}%

%
\end{example}
The obtained numerical results for Examples \ref{ex2} and \ref{ex4} are disclosed in Tables  \ref{tab2}, \ref{tab2-1}, \ref{tab4} and \ref{tab4-1}.
Similar to what we observed for second example of previous subsection, Algorithm \ref{a3} is superior to other examined approaches.


\section{Conclusions}\label{Sec5}
In this paper, we first present some results for conditioning of the Stein tensor equation.
Then, we proposed the global Golub--Kahan bidiagonalization process with applications for solving ill-posed linear tensor equations such as Sylvester and Stein tensor equations where the iterative schemes can be also implemented for an arbitrary linear operator over $\mathbb{R}^{n_1\times n_2\times\cdots \times n_k}$. We gave some new theoretical results and present  some numerical examples with applications to color image restoration to show  the applicability and the effectiveness of the  proposed schemes for computing solutions of high quality.

\end{document}